\numberwithin{equation}{section}
 \newtheorem{thm}{Theorem}[section]
 \newtheorem{lem}[thm]{Lemma}
\newtheorem{prop}[thm]{Proposition}
\theoremstyle{definition}
  \newtheorem{rem}[thm]{Remark}
  \newtheorem{ex}[thm]{Example}
\newtheorem*{rep@theorem}{\rep@title}\newcommand{\newreptheorem}[2]{%
\newenvironment{rep#1}[1]{%
\def\rep@title{\bf #2 \ref{##1}}%
\begin{rep@theorem}}%
{\end{rep@theorem}}}
\definecolor{turkisD}{HTML}{579C87}
\definecolor{turkisH}{HTML}{A1C181}
\definecolor{gelb}{HTML}{FCCA46}
\definecolor{orange}{HTML}{FE7F2D}
\definecolor{blauD}{HTML}{233D4D }
\newcommand{\demph}[1]{\textbf{#1}} %\emph{#1}
\DeclarePairedDelimiter{\abs}{\lvert}{\rvert}
\DeclareMathOperator{\mult}{mult}
\DeclareMathOperator{\conv}{conv}
\DeclareMathOperator{\spn}{span}
\DeclareMathOperator{\cone}{cone}
\DeclareMathOperator{\co}{co}
\DeclareMathOperator{\codim}{codim}
\DeclareMathOperator{\Oc}{in}
\DeclareMathOperator{\ehr}{ehr}
\DeclareMathOperator{\Ex}{cu}
\newcommand{\Z}{\mathbbmss{Z}}
\newcommand{\R}{\mathbbmss{R}}
\newcommand{\N}{\mathcal{N}}
\newcommand{\M}{\mathcal{M}}
\newcommand{\Hy}{\mathcal{H}}
\newcommand{\Or}{\upvarsigma}
\newcommand{\Pol}{\mathcal{P}}
\newcommand{\Po}{\mathsf{P}}
\newcommand{\Q}{\mathcal{Q}}
\newcommand{\B}{\mathcal{B}}
\newcommand{\va}{\mathbf{a}}
\newcommand{\x}{\mathbf{x}}
\newcommand{\y}{{y}}
\newcommand{\vt}{{t}}
\title%
\subjclass[2020]{52Bxx; 52B20; 52C35; 05Axx; 05C15; 05C65}
\author{Sophie Rehberg}
\date{June 13, 2022}
\keywords{inside-out polytopes, Ehrhart polynomials, Ehrhart-Macdonald reciprocity, combinatorial reciprocity theorems,  hypergraphic polytopes, hypergraph colorings and orientations}
\begin{document}

\begin{abstract}

Generalized permutahedra are a class of polytopes with many interesting combinatorial subclasses. 
We introduce pruned inside-out polytopes, a generalization of inside-out polytopes introduced by Beck--Zaslavsky (2006), which have many applications such as recovering the famous reciprocity result for graph colorings by Stanley.
We study the integer point  count of pruned inside-out polytopes by applying classical Ehrhart polynomials and Ehrhart--Macdonald reciprocity.
This yields a geometric perspective on and a generalization of a combinatorial reciprocity theorem for generalized permutahedra by Aguiar--Ardila (2017), Billera--Jia--Reiner (2009), and Karaboghossian (2022). 
Applying this reciprocity theorem to hypergraphic polytopes allows to give a geometric proof of a combinatorial reciprocity theorem for hypergraph colorings by Aval--Karaboghossian--Tanasa (2020). 
This proof relies, aside from the reciprocity for generalized permutahedra, only on elementary geometric and combinatorial properties of hypergraphs and their associated polytopes.
 
\end{abstract}

\maketitle
\tableofcontents
% \newpage

\section{Introduction}

Generalized permutahedra are an interesting class of polytopes containing numerous subclasses of polytopes defined via combinatorial structures,
such as graphic zonotopes, hypergraphic polytopes (Minkowski sums of simplices), simplicial complex polytopes, matroid polytopes, associahedra, and nestohedra.
Generalized permutahedra themselves are closely related to submodular functions, which have applications in optimization.

A \emph{combinatorial reciprocity theorem} can be described as a result that relates two classes of combinatorial objects via their enumeration problems (see, e.g., \cite{Stanley1974, Beck2018}).
For example, the number  of proper $m$-colorings of a graph $g=(I,E)$ agrees with a polynomial $\chi(g)(m) $ of degree $d=\lvert I\rvert$ for positive integers $m\in\Z_{>0}$, and $(-1)^d\chi(g)(-m)$ counts the number of  pairs of compatible acyclic orientations and $m$-colorings of the graph $g$  \cite{Stanley1973}.
For precise definitions see \cref{Assec:hg} below.

One of our main results is a combinatorial reciprocity theorem for generalized permutahedra counting integral directions with $k$-dimensional maximal faces:
\begin{reptheorem}{thm:moredim}
 For a generalized permutahedron $\Pol\subset \R^d$ and $k=0,\dots,d-1$,
\begin{equation}
\begin{split}
 \chi_{d,k}(\Pol)(m) \coloneqq& \ \#{\big\{\y\in [m]^d \ \colon\ \y\text{-maximum face $\Pol_\y$ is a $k$-face}\big\}}
\end{split}
\end{equation}
 agrees with a polynomial of degree $d-k$, and
\begin{equation}
\begin{split}
 (-1)^{d-k}\chi_{d,k}(\Pol)(-m) &= \sum_{\y\in[m]^d} 
			 \# \left(k\text{-faces of } \Pol_\y\right).%\\
\end{split}
\end{equation}
\end{reptheorem}
\noindent
We will use integer point counting in dissected and dilated cubes to prove this result
and comment on further generalizations in \Cref{rem:beyond}.

The special case of this theorem for $k=0$, i.e., generic directions, was obtained by Aguiar and Ardila \cite{AA17}, and earlier by Billera, Jia, and Reiner \cite{Billera2009} in a slightly different language.
The $k=0$ case  was also recently extended in \cite{karaboghossian_combinatorial_2022}.
As shown for some examples in \cite[Section~18]{AA17}  the application of such a result to the various subclasses of generalized permutahedra yields already known combinatorial reciprocity theorems for their related combinatorial structures  such as matroid polynomials \cite{Billera2009}, Bergmann polynomials of matroids and Stanley's famous reciprocity theorem for graph colorings \cite{Stanley1973}.

Aguiar and Ardila develop a Hopf monoid structure on the species of generalized permutahedra, work with polynomial invariants defined by characters, and apply their antipode formula to get the combinatorial interpretation of the reciprocity result for generalized permutahedra for $k=0$ (\cref{cor:genP}, below) \cite[Sections 16, 17]{AA17}. This method is also used in \cite{karaboghossian_combinatorial_2022}.
The approach in \cite{Billera2009} is similar to the one by Aguiar and Ardila. Billera, Jia, and Reiner use  Hopf algebras of matroids and quasisymmetric functions, as well as a multivariate generating function as isomorphism invariants of matroids.
The reciprocity providing ingredient is again the antipode of a Hopf algebra together with Stanley's reciprocity for $P$-partitions \cite[Sections~6 and 9]{Billera2009}.

We give a different, geometric perspective.
In order to prove \cref{thm:moredim} we apply Ehrhart--Macdonald reciprocity to pruned inside-out polytopes. 
A pruned inside-out polytope ${\Q\setminus \bigcup\N^{\co1}}$  consist of the points that lie inside a polytope $\Q$ but not in the codimension one cones $\N^{\co1}$ of a complete polyhedral fan $\N$.
This is a generalization of inside-out polytopes introduced by Beck and Zaslavsky \cite{Beck2006}.
An inside-out polytope $\Q\setminus\Hy$ consists of the points in a polytope $\Q$ but off the hyperplanes in the arrangement $\Hy$. 
We think of the codimension-one cones $\N^{\co1}$ defining a pruned inside-out polytope as \emph{pruned} hyperplanes, hence the name.
One of the many applications of inside-out polytopes \cite{Beck2006a,Beck2006b,Beck2010,Beck2018} is yet a different proof of Stanley's reciprocity result for graph colorings \cite{Stanley1973}. 

 Aval, Karaboghossian, and Tanasa presented a reciprocity theorem for hypergraph colorings \cite{Aval2020}, generalizing Stanley's result for graph colorings. 
A main tool in the paper is a Hopf monoid structure on hypergraphs defined in \cite[Section 20.1.]{AA17} and the associated basic polynomial invariant.
However, they do \emph{not} use the antipode as reciprocity inducing element, but rather technical computations involving Bernoulli numbers.

 In \cref{Assec:hg} we show how the reciprocity theorem for hypergraph colorings in \cite{Aval2020} is a consequence of the reciprocity for generalized permutahedra. 
 Our main tool is a vertex description of hypergraphic polytopes in terms of acyclic orientations of hypergraphs (\cref{prop:hgPconv}).
More recent work by Karaboghossian \cite{karaboghossian_polynomial_2020, karaboghossian_combinatorial_2022} presents a more general version of the combinatorial reciprocity result for hypergraphs and an alternative proof with similar techniques as we present in \cref{Assec:hg}.

 As spelled out in \cite[Sections~21--25]{AA17} and \cite[Section~4]{Aval2020} hypergraphs and hypergraphic polytopes contain a number of interesting combinatorial subclasses such as simple hypergraphs, graphs, simplicial complexes, building sets, set partitions, and paths, together with their associated polytopes such as graphical zonotopes, simplicial complex polytopes, nestohedra, and graph associahedra.

The paper is organized as follows:
In \cref{sec:prunedEhr} we introduce the notion of pruned inside-out polytopes, define two counting functions on pruned inside-out polytopes, and derive \mbox{(quasi-)polynomiality} and reciprocity results.
\cref{sec:appl} provides two applications of the results in \cref{sec:prunedEhr}; 
first, to generalized permutahedra, giving a new geometric perspective on reciprocity theorems in \cite{Billera2009,AA17,karaboghossian_combinatorial_2022} and, moreover,
presenting  generalized versions for arbitrary face dimensions (\cref{ssec:AgenP}).
The relationship between our approach and the polynomial invariants for Hopf monoids is analyzed in \cref{ssec:relation}.
Secondly, we apply the reciprocity theorem for generalized permutahedra to the subclass of hypergraphic polytopes giving an elementary combinatorial and geometric proof of the reciprocity theorem for hypergraph colorings in \cite{Aval2020} (\cref{Assec:hg}). 
In \cref{appendix} we provide more details about generalized permutahedra. 
In particular we give a self-contained proof of the well known bijection between generalized permutahedra and submodular set functions. 
We do not claim the proof to be either new or original, but it is hard to find in the literature.

\section{Pruned inside-out polytopes and Ehrhart theory}\label{sec:prunedEhr}

 In \cite{Beck2006} Beck and Zaslavsky develop the notion of an inside-out polytope, that is, a polytope dissected by hyperplanes. 
 Counting integer point in a polytope but off certain hyperplanes turns out to be a useful tool to derive (quasi-)polynomiality results and reciprocity laws  for various applications such as  graph colorings 
 and signed graph colorings, composition of integers,  nowhere-zero flows on graphs and signed graphs, antimagic labellings, as well as magic, semimagic, and magic latin squares \cite{Beck2006a,Beck2006b,Beck2010}.
 After reviewing the necessary notions from polytopes and Ehrhart theory (\cref{ssec:polytopes&Ehrhart}),
 we introduce a generalization of inside-out polytopes, which we call pruned inside-out polytopes and develop  Ehrhart-theoretic results (\cref{ssec:piop}).

\subsection{Preliminaries: Polytopes and Ehrhart theory}\label{ssec:polytopes&Ehrhart}

First recall some basic notions from polytopes; for more detailed information consult, e.g., %TODO 
\cite{ziegler_lectures_1998, Gruenbaum2003}. 
A \demph{polyhedron} $\Pol\subset\R^d$ is the  intersection of finitely many halfspaces. 
If the intersection is bounded it is called a \demph{polytope} and can equivalently be described as the convex hull of finitely many points in $\R^d$.
A \demph{(polyhedral) cone} $N$ is a polyhedron such that for $x\in N$ the point $\lambda x$ is again contained in $N$ for every $\lambda\in\R_{\geq0}$.
A \demph{supporting hyperplane} $H$ of a polyhedron $\Pol$ is a hyperplane such that the polyhedron is contained in one of the closed halfspaces. 
The intersection of a polyhedron $\Pol$ with a supporting hyperplane $H$ is a \demph{face} $F=H\cap \Pol$  of $\Pol$. 
The \demph{dimension} $\dim(\Pol)$ (resp. $\dim(F)$) of a polyhedron $\Pol$ (resp. face $F$) is the dimension of the affine hull of the polytope $\Pol$ (resp. face $F$),
0-dimensional faces are called \demph{vertices} and $(\dim(\Pol)-1)$-dimensional faces are called \demph{facets}.
The \demph{codimension} $\codim(F)$ of an polyhedron $F$ is the difference between the dimension of the ambient space and the dimension of the polyhedron $\dim(F)$.
A polyhedron $\Pol$ is a \demph{rational polyhedron}, if all its facet defining hyperplanes $H$ can be described as $H=\left\{\x\in\R^d\,\colon\, \va\cdot \x=b\right\}$ for some $\va\in\Z^d$ and $b\in\Z$.

For a polytope $\Q\subset\R^d$ and a positive integer $t\in\Z_{> 0}$ we define the \demph{t\textsuperscript{th} dilate of} $\Q$ as
\begin{equation}
 t\Q\coloneqq \big\{x\in\R^d\,\colon\, \tfrac{1}{t} x\in\Q \big\}=\big\{tx\in\R^d\,\colon\, x\in\Q\big\}\,.
\end{equation}
The \demph{Ehrhart counting function} $\ehr_\Q(t)$ counts the number of integer point in the  
$t$\textsuperscript{th} dilate of the polytope $\Q$:
\begin{equation}
 \ehr_\Q(t)\coloneqq \#\left(\tfrac{1}{t}\Z^d\cap\Q\right)=\#\left(\Z^d\cap t\Q\right)\,.
\end{equation}
See \cref{fig:cube} for an example.
\begin{figure}
\centering
 \includegraphics[width=.5\textwidth]{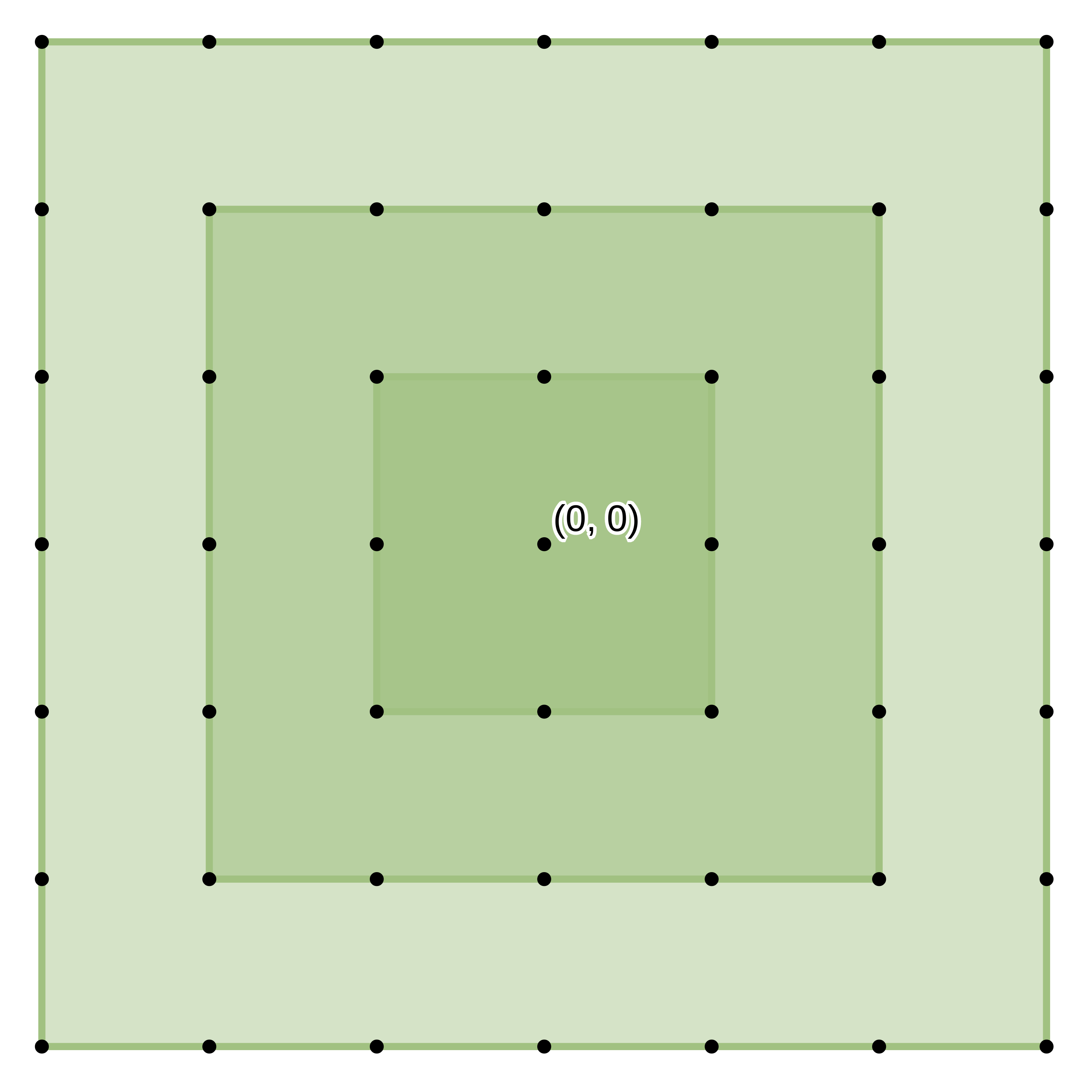}
 \caption{The cube $[-1,1]^2$ and its dilates $[-2,2]^2$ and $[-3,3]^2$ with Ehrhart  function $\ehr_{[-1,1]^2}(t)=(2t+1)^2$ and $\ehr_{(-1,1)^2}(t)=(2t-1)^2$.}\label{fig:cube}
\end{figure}
Recall that a \demph{rational (resp. integral) polytope} has vertices with rational (resp. integral) coordinates.
We call the least common multiple of the denominators of all coordinates of all vertices of a rational polytope the \demph{denominator of $\Q$.}
A \demph{quasipolynomial} of degree $d$ is a function $q\colon \Z\to\R$
 of the form
$q(t) = c_d(t) t^d + \dots + c_1(t) t + c_0(t)$ where 
$c_0, c_1,\dots, c_d\colon \Z\to\R$ are periodic functions.
The least common period of $c_0(n), c_1(n),
\dots, c_d(n)$ is the \demph{period} of $q(t)$.
\begin{thm}[Ehrhart's theorem {\cite{ehrhart_sur_1962}}]\label{thm:ehr} 
For a rational polytope $\Q\subset\R^d$ the Ehrhart counting function $\ehr_\Q(t)$ agrees with a  quasipolynomial of degree equal to the dimension of $\Q$ and period dividing the denominator of $\Q$ for all $t\in\Z_{>0}$.
\end{thm}
For an integer polytope $\Q$ Ehrhart's theorem implies that the Ehrhart counting function $\ehr_\Q$ is a polynomial. 
Therefore it is often called the \demph{Ehrhart polynomial}.
The following reciprocity theorem was conjectured and proved for various special cases by Eugéne Ehrhart and proved by Ian G. Macdonald.
It is the foundation for the results in this paper. 
\begin{thm}[Ehrhart--Macdonald reciprocity {\cite{macdonald_polynomials_1971}}]\label{thm:EhrMac}
Let $\Q\subset\R^d$ be a rational polytope and $t\in\Z_{>0}$. Then 
\begin{equation}
 (-1)^{\dim \Q} \ehr_\Q(-t)=\ehr_{\Q^\circ}(t)\coloneqq \#\left(\Z^d\cap t\Q^\circ\right)
\end{equation}
where $\Q^\circ$ is the (relative) interior of the polytope $\Q$.
\end{thm}

\subsection{Pruned inside-out polytopes and their counting functions}\label{ssec:piop}

Let $\N$ be a \demph{complete fan} in $\R^d$, that is, a family of polyhedral cones such that 
\begin{enumerate}[nolistsep]
 \item every non-empty face of a cone $N \in \N$ is also contained in $\N$,
 \item the intersection of two cones in $ \N$ is a face of both cones, 
 \item \label{iii} the union of the cones in the fan $\N$ covers the ambient space $\R^d$, i.e., 
 \begin{equation}
  \bigcup \N\coloneqq \bigcup_{N\in\N}N=\R^d \,.
 \end{equation}
\end{enumerate}
For an introduction to complete fans consult, e.g., \cite[Section 7.1]{ziegler_lectures_1998}. 
A fan is called \demph{rational} if its cones $N \in \N$ are generated by rational vectors.
For a complete fan $\N$ in $\R^d$ we define the \demph{codimension-one fan}%
\footnote{This is still a fan, but it is not complete anymore, i.e., condition \ref{iii} in the above definition is not fulfilled.}
 $\N^{\co 1}$ in $\R^d$ to contain the cones in $\N$ with codimension $\geq 1$, that is, all but the full-dimensional cones in $\N$:
  \begin{equation}
  \N^{\co 1}\coloneqq \big\{ N\in\N\,\colon\, \dim N\leq d-1 \big\}\,.\label{def:codim1fan}
 \end{equation}
 We think of the codimension-one fan as a \emph{pruned} hyperplane arrangement, since cones of codimension one can be seen as parts of hyperplanes.
\begin{figure}
\centering
\begin{subfigure}[t]{.23\textwidth}
  \centering
  \includegraphics[]{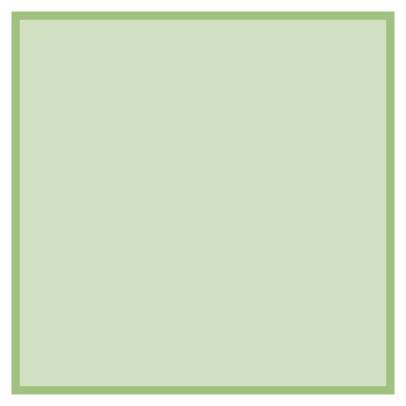}
\caption{Polytope $[-1,1]^2$.}\label{fig:Q}
\end{subfigure}\hfill%
\begin{subfigure}[t]{.23\textwidth}
  \centering
  \includegraphics[]{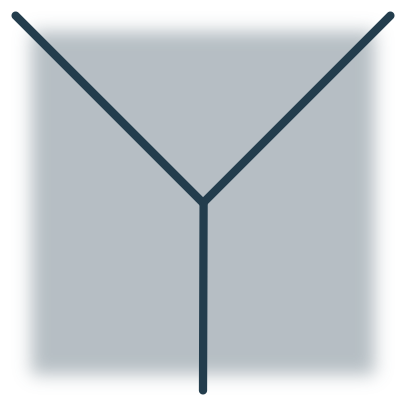}
\caption{Complete fan $\M$ and its codimension-$1$ fan $\M^{\co1}$ (rays) as defined in \Cref{ex:piop}.}\label{fig:N}
\end{subfigure}\hfill%
\begin{subfigure}[t]{.24\textwidth}
  \centering
  \includegraphics[]{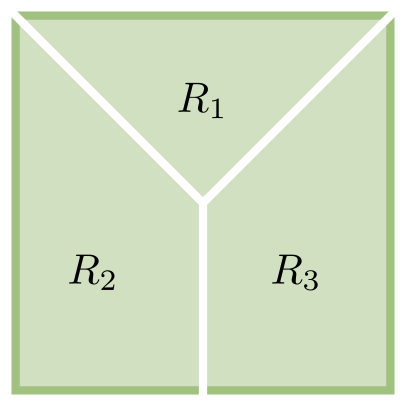}
\caption{The (closed) pruned inside-out poly\-tope $[-1,1]^2\setminus\bigcup\M^{\co1}$ with regions $R_1,R_2,R_3$. }\label{fig:Q-N}
\end{subfigure}\hfill%
\begin{subfigure}[t]{.23\textwidth}
  \centering
  \includegraphics[]{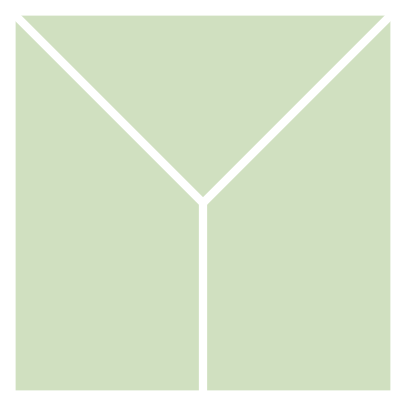}
\caption{The open pruned inside-out poly\-tope $(-1,1)^2\setminus\bigcup\M^{\co1}$ with open regions.}\label{fig:Qo-N}
\end{subfigure}
\caption{Construction of  pruned inside-out polytopes  and their regions as in \Cref{ex:piop}.}\label{abb:pruned}
\end{figure}
For a polytope $\Q\subset\R^d$ and a complete  fan $\N$ in $\R^d$  we call 
\begin{equation}
\begin{split}
  \Q\setminus \Big(\bigcup \N^{\co 1}\Big) 
  =& \biguplus_{\substack{N\in\N,\\ N \text{ full-dimensional}} } \left(\Q\cap N^\circ\right) 
\end{split}
\end{equation}
 a \demph{pruned inside-out polytope} and we call the connected components in $\Q\setminus \left(\bigcup \N^{\co 1}\right)$ \demph{regions}. 
 So, a  pruned inside-out polytope $\Q\setminus \left(\bigcup \N^{\co 1}\right)$ is the disjoint union of its regions $\Q\cap N^\circ$, where $N^\circ$ is an open full-dimensional cone in $\N$. 
 We will mostly consider open pruned inside-out polytopes  $\Q^\circ\setminus \left(\bigcup \N^{\co 1}\right)$, which decompose into disjoint open polytopes, the regions. 
  A pruned inside-out polytope is \demph{rational} if the topological closures of all its regions are rational polytopes.
 \begin{ex}\label{ex:piop}
 Let $[-1,1]^2\subseteq \R^2$ be a square (see \Cref{fig:Q}).
 Let $\M\coloneqq\{N_1,N_2,N_3,\dots\}$ be the complete fan consisting of all faces of the three full-dimensional cones
 \begin{equation}
 \begin{split}
%   N_1\coloneqq\cone\{(1,1),(-1,1)\}\,,\quad
%   N_2\coloneqq\cone\{(-1,1),(0,-1)\}\,,\quad
%   N_3\coloneqq\cone\{(0,-1), (1,1)\}\,
  N_1\coloneqq\{x\in\R^2\colon x_2+x_1\geq 0,\ x_2-x_1\geq0\}\,,\quad
  N_2&\coloneqq\{x\in\R^2\colon x_2\leq 0,\ x_2+x_1\leq0\}\,,\\
  N_3&\coloneqq\{x\in\R^2\colon x_2\geq 0,\ x_2-x_1\leq0\}\,.
 \end{split}
 \end{equation}
 Then the codimension-one fan $\M^{\co1}=\{n_1,n_2,n_3\}$ consists of the three rays
 \begin{equation}
  n_1\coloneqq\{ (\lambda,\lambda)\in\R^2\colon\lambda\geq0\}\,,\ 
  n_2\coloneqq\{ (-\lambda,\lambda)\in\R^2\colon\lambda\geq0\}\,,\ 
  n_3\coloneqq\{ (0,-\lambda)\in\R^2\colon\lambda\geq0\}\,.
 \end{equation}
 See \Cref{fig:N}.
 The pruned inside-out polytope
 \begin{equation}
  [-1,1]^2\setminus\bigcup\M^{\co1}=[-1,1]^2\setminus\bigcup_{i=1}^3 n_i = \bigcup_{i=1}^3\left( [-1,1]^2\cap N_i^\circ\right)
 \end{equation}
 is composed of three half-open regions $R_1,R_2,R_3$, see \Cref{fig:Q-N}. Their topological closures can be described as
 \begin{equation}
 \begin{split}
  \overline{R}_1=\conv\{(0,0),(1,1),(-1,1)\}\,,\quad
  \overline{R}_2&=\conv\{(0,0),(-1,1), (-1,-1),(0,1)\}\,,\\
  \overline{R}_3&=\conv\{(0,0),(0,1),(1,-1), (1,1)\}\,.
 \end{split}
 \end{equation}
 The open pruned inside-out polytope 
  \begin{equation}
  (-1,1)^2\setminus\bigcup\M^{\co1}=(-1,1)^2\setminus\bigcup_{i=1}^3 n_i = \bigcup_{i=1}^3 R_i^\circ
 \end{equation}
 is depicted in \Cref{fig:Qo-N}.
 \end{ex}
For a positive integer $t\in\Z_{>0}$ we define the \demph{inner pruned Ehrhart function} as
\begin{equation}
\begin{split}
  \Oc _{\Q,\N^{\co 1}}(t)\coloneqq& \#\bigg(\frac{1}{t}\Z^d\cap \Big(\Q\setminus \Big(\bigcup \N^{\co 1}\Big)\Big)\bigg)=\#\bigg(\Z^d\cap t\cdot \Big(\Q\setminus \Big(\bigcup \N^{\co 1}\Big) \Big)\bigg)\,,
\end{split}
\end{equation}
where
\begin{equation}
\begin{split}
 t\cdot\Big(\Q\setminus \Big(\bigcup \N^{\co 1}\Big) \Big)\coloneqq&
 \ t\cdot\Q\,\setminus\, \Big(\bigcup t\cdot\N^{\co 1}\Big) \\
 \coloneqq&\big\{t\y\in\R^d\,\colon\, \y\in\Q\big\}\setminus \big\{t\y\in\R^d\,\colon\, \y\in N, \text{ for some }N\in\N^{\co1}\big\} \,.
\end{split}
\end{equation}
See \cref{abb:InQ,abb:InQo} for illustrations.
\begin{lem}\label{lem:InSum}
 For a  polytope $\Q \subset \R^d$ and a complete fan $\N $ in $\R^d$,
 \begin{equation}
	\Oc_{\Q^\circ,\N^{\co 1}}(t)=\sum_{i=1}^k \ehr_{R^\circ_i}(t)
 \end{equation}
where $R^\circ_i$ are the open regions of the open pruned inside-out polytope $\Q^\circ\setminus \left(\bigcup \N^{\co 1}\right)$.
\end{lem}
 \begin{proof}
   We  decompose the pruned inside-out polytope $\Q\setminus \left(\bigcup \N^{\co 1}\right)$ into its regions $R_1,\dots,R_k$.
 Then the open pruned inside-out polytope $\Q^\circ\setminus \left(\bigcup \N^{\co 1}\right)=\biguplus_{i=1}^k R_i^\circ$ is the disjoint union of the open polytopes $R_1^\circ,\dots,R_k^\circ$.
 The result follows since counting lattice points is a valuation (see, e.g., \cite[Section 3.4]{Beck2018}).
 \end{proof}
Furthermore, we define a second counting function for pruned inside-out polytopes, the 
\demph{cumulative pruned Ehrhart function} $\Ex_{\Q,\N^{\co 1}}(\Z^d)$, for a positive integer $t\in\Z_{>0}$ as
\begin{equation}
 \Ex_{\Q,\N^{\co 1}}(t)\coloneqq \sum_{\y\in\frac{1}{t}\Z^d} \mult_{\Q,\N^{\co 1}}(\y) 
 = \sum_{\y\in\Z^d} \mult_{(t\cdot\Q,t\cdot\N^{\co 1})}(\y)\,,
\end{equation}
where
\begin{equation}
\begin{split}
  \mult_{\Q,\N^{\co 1}}(\y) &\coloneqq
  \begin{cases}
    \ \# \left(\text{closed full-dimensional normal cones in $\N$ containing }\y\right)\  \text{ if }\y\in\Q\,,\\
    \ 0\, \text{ else.}
  \end{cases}
\end{split}
\end{equation}
See \cref{abb:EcR} for an illustration.
\begin{lem}\label{lem:EcR}
For a polytope $\Q\subset \R^d$ and a complete fan $\N$ in $\R^d$,
\begin{equation}
\Ex_{\Q,\N^{\co 1}}(t)= \sum_{i=1}^k \ehr_{ \overline{R}_i}(t)\, , \label{eq:EcR}
\end{equation}
where $\overline{R}_i$ are the topological closures of the regions $R_i$ of the pruned inside-out polytope ${\Q\setminus \left(\bigcup \N^{\co 1}\right)}$.
\end{lem}
 \begin{proof}
 The right hand side of the equation counts lattice points in the interior of the regions $tR_i$ precisely once and lattice points in the boundaries of the regions once for every closed region the lattice point is contained in. The closed regions are the intersections of the polytope $\Q$ with the closed full-dimensional cones in $\N$. Hence every lattice $y$ point in $t\Q$ is counted with multiplicity $\mult_{(t\cdot\Q,t\cdot\N^{\co 1})}(\y)$.
 \end{proof}
\begin{figure}
\centering
\begin{subfigure}[b]{.32\textwidth}
  \centering
  \includegraphics[]{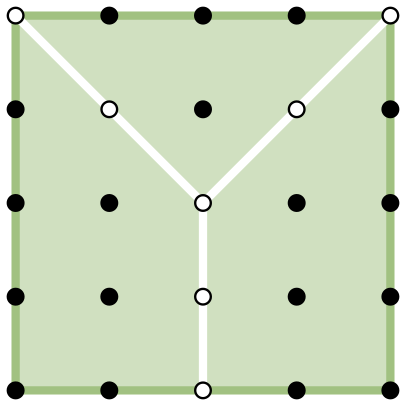}
\caption{$\Oc _{[-1,1]^2,\N^{\co 1}}(2)=18$}\label{abb:InQ}
\end{subfigure}%
\begin{subfigure}[b]{.32\textwidth}
  \centering
  \includegraphics[]{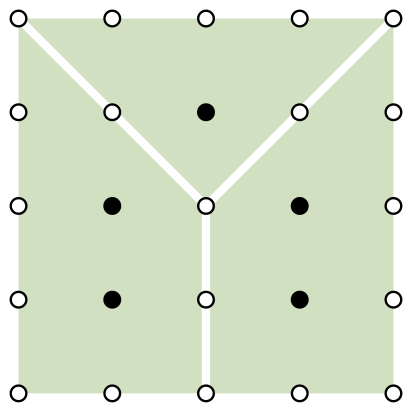}
\caption{$\Oc _{(-1,1)^2,\N^{\co 1}}(2)=5$}\label{abb:InQo}
\end{subfigure}%
\begin{subfigure}[b]{.32\textwidth}
  \centering
  \includegraphics[]{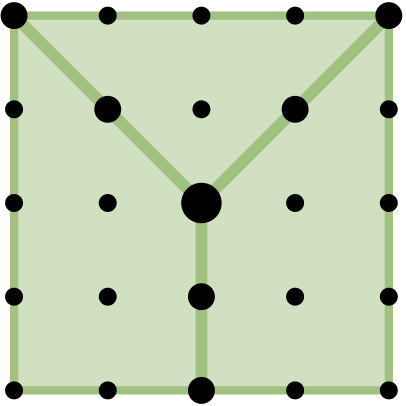}
\caption{$\Ex_{[-1,1]^2,\N^{\co 1}}(2)=33$ }\label{abb:EcR}
\end{subfigure}
\caption{Inner and cumulative pruned Ehrhart functions of the pruned inside-out polytope $[-1,1]^2\setminus\bigcup\M$ and the open pruned inside-out polytope $(-1,1)^2\setminus\bigcup\M$ illustrated. White dots are not counted, black dots are counted according to their size with multiplicity one, two, or three. 
The corresponding computations can be found in \Cref{ex:count}}\label{abb:count}
\end{figure}
 \begin{ex}\label{ex:count}
 We compute the counting functions for the pruned inside-out polytopes introduced in \Cref{ex:piop}:
 \begin{equation}
  \begin{split}
   \Oc_{(-1,1)^2,\M^{\co1}}(t)= (t^2-2t+1)+2(\tfrac{3}{2}t^2-\tfrac 5 2 t+1)=4t^2-7t+3\\
   \Ex_{[-1,1]^2,\M^{\co1}}(t)= (t^2+2t+1)+2(\tfrac{3}{2}t^2+\tfrac 5 2 t+1)=4t^2+7t+3\,.
  \end{split}
 \end{equation}
 See \Cref{abb:count} for illustrations.
 \end{ex}
\begin{thm}
\label{thm:pioehr} 
Let $\Q\setminus \left(\bigcup \N^{\co 1}\right) \subset\R^d$ be a rational pruned inside-out polytope.
Then the inner pruned Ehrhart function $ \Oc _{\Q^\circ,\N^{\co 1}}(t)$ and the cumulative pruned Ehrhart function $\Ex_{\Q,\N^{\co 1}}(t)$ agree with  quasipolynomials in $t$ of degree $ d$ for $t\in\Z_{>0}$ and are related by reciprocity:
 \begin{equation}
 (-1)^d \Oc _{\Q^\circ,\N^{\co 1}}(-t)= \Ex_{\Q,\N^{\co 1}}(t).
 \end{equation}
\end{thm}
\begin{proof}
We first use \cref{lem:InSum} to get
\begin{equation}
 \Oc _{\Q^\circ,\N^{\co 1}}(t)
 =\sum_{i=1}^k \ehr_{ R_i^\circ}(t)\,.
\end{equation}
For every $i=1,\dots, k$ we can apply Ehrhart's \cref{thm:ehr} to $\ehr_{ R_i^\circ}(t) $, hence the counting function $\Oc _{\Q^\circ,\N^{\co 1}}(t)$ is a sum of quasipolynomials, which is again a quasipolynomial.

For the second part of the claim 
we use Ehrhart--Macdonald reciprocity (\cref{thm:EhrMac}) and compute
\begin{equation}
 \begin{split}
  \Oc _{\Q^\circ,\N^{\co 1}}(t) \ =\ \sum_{i=1}^k \ehr_{ R_i^\circ}(t)
		\ =\  \sum_{i=1}^k (-1)^d \ehr_{\overline{R}_i}(-t)
		\ =\ (-1)^d \Ex_{\Q,\N^{\co 1}}(-t),
 \end{split}
\end{equation}
where the last equality follows from \cref{lem:EcR}.
\end{proof}
\begin{rem}\label{rem:pioehr}
 In the case that the polytope $Q$ and the complete fan intersect such that all the closed regions
$\overline{R}=\Q\cap N$ of the pruned inside-out polytope $\Q\setminus \left(\bigcup \N^{\co 1}\right)$ are integer polytopes, the counting functions $ \Oc _{\Q^\circ,\N^{\co 1}}(t)$ and $\Ex_{\Q,\N^{\co 1}}(t)$ agree with a \emph{polynomial} of degree $d$, by \cref{thm:ehr} and \cref{thm:EhrMac}. We will use this fact in the proof of \cref{cor:genP2}.
\end{rem}
\begin{rem}
One can certainly generalize this setting, e.g., to polyhedral complexes. The framework here is motivated by the applications below. 
\end{rem}

\section{Applications}\label{sec:appl}

After a short introduction to generalized permutahedra (\cref{ssec:prelimGenP}) we show how the tools from \cref{sec:prunedEhr} can be applied to derive known and unknown  reciprocity results for generalized permutahedra (\cref{ssec:AgenP}).
Reciprocity theorems for generalized permutahedra by Ardila and Aguiar (\cite[Propositions~$17.3$ and $17.4$]{AA17}, see \cref{cor:genP}) and extended by Karaboghossian (\cite[Theorem~$2.5$ and Theorem~$2.8$]{karaboghossian_combinatorial_2022}, see \cref{thm:hopfreci}),  
were developed by introducing a Hopf monoid structure on the vector species of generalized permutahedra and using their antipode formula to derive  polynomial invariants.
We give a new interpretation from a discrete-geometric perspective as integer point counting functions.
In  \cref{ssec:relation} we give an explanation on the relation between the results in this paper and prior results developed with Hopf-algebraic tools.
 Finally  we demonstrate why generalized permutahedra are such an interesting class of polytopes by translating 
 the reciprocity result for hypergraphic polytopes to  combinatorial statements about hypergraphs (\cref{Assec:hg}). 

\subsection{Preliminaries: Generalized permutahedra}\label{ssec:prelimGenP}
 \begin{figure}
\centering
\begin{subfigure}[b]{.25\textwidth}
  \centering
  \includegraphics[width=.95\linewidth]{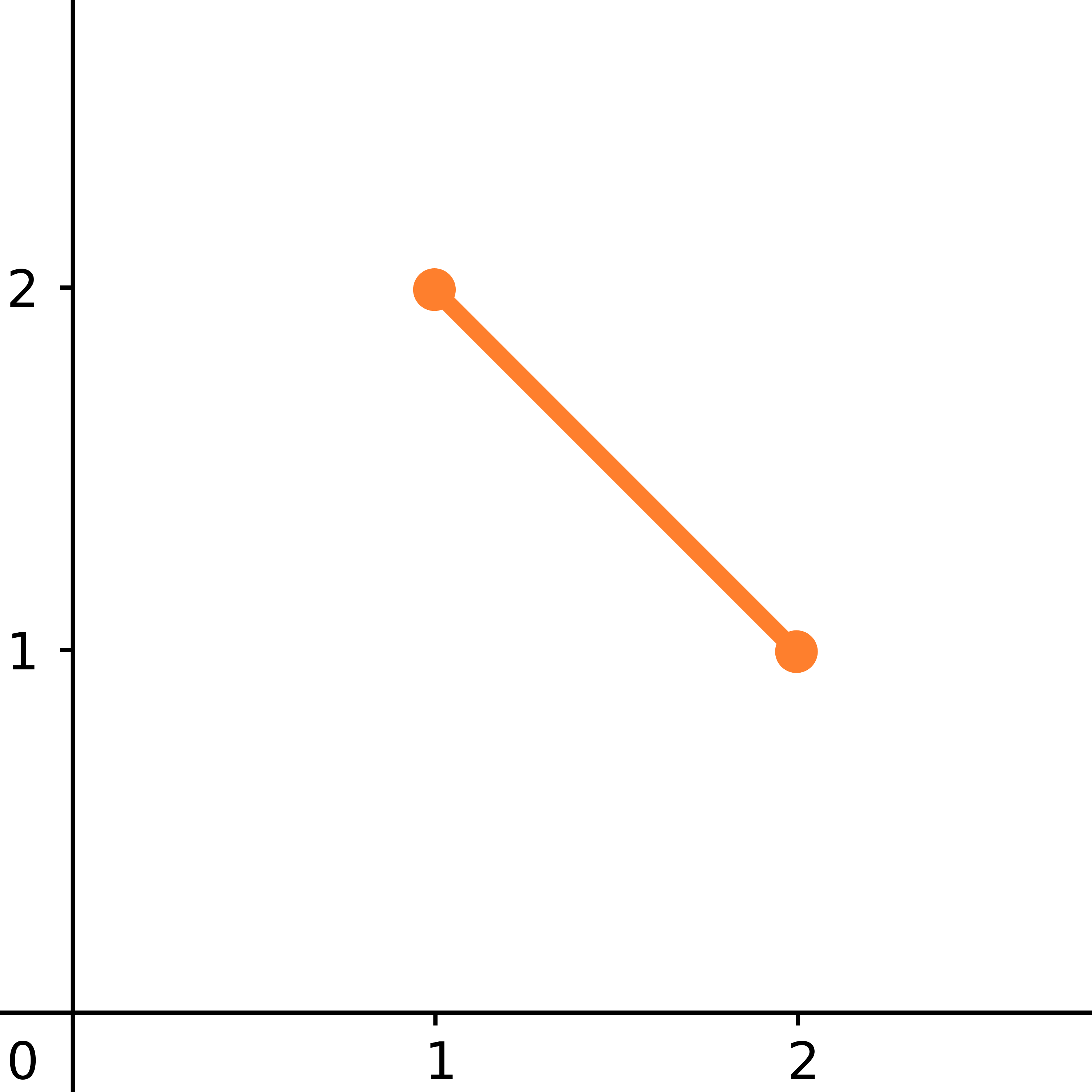}
\caption{ $\pi_{2}\subset\R^2$}
\end{subfigure}%
\begin{subfigure}[b]{.36\textwidth}
  \centering
  \includegraphics[trim=180 50 190 115, clip, width=.95\linewidth]{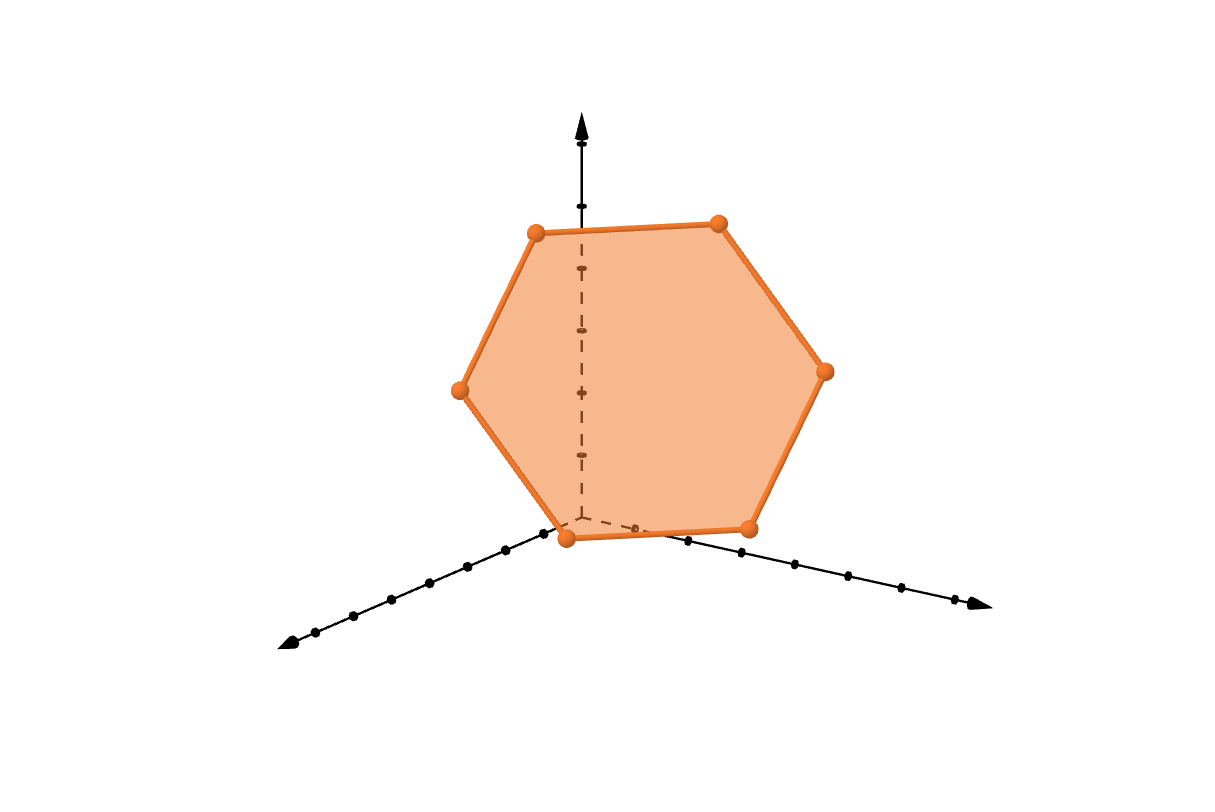}
\caption{$\pi_3\subset\R^{3}$}\label{abb:perm3}
\end{subfigure}%
\begin{subfigure}[b]{.39\textwidth}
  \centering
  \includegraphics[width=.95\linewidth]{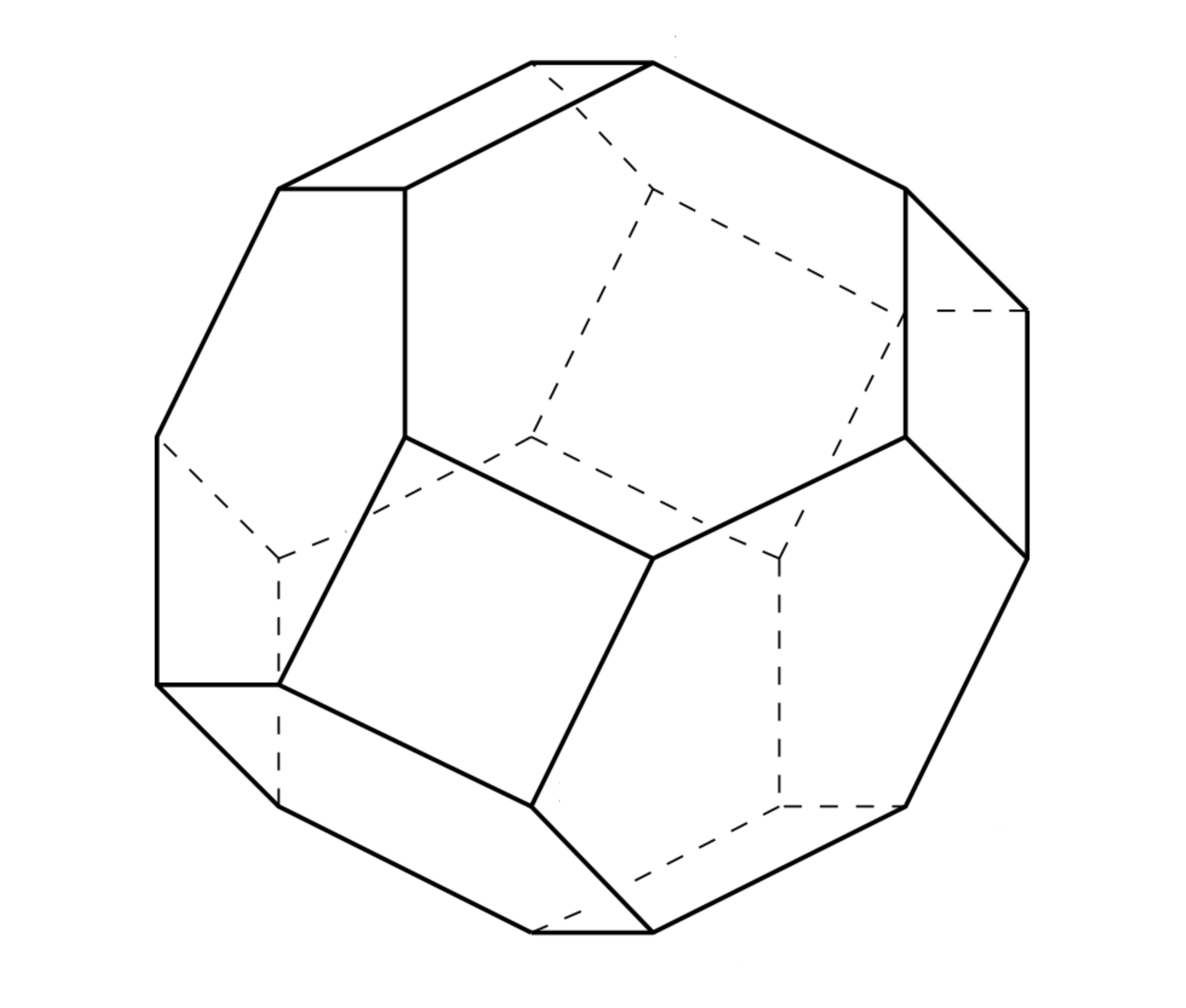}
\caption{$\pi_{4}\subset\R^4$
 \cite[p.$18$]{ziegler_lectures_1998}}
\end{subfigure}
\caption{Three standard permutahedra.
}\label{abb:perm}
\end{figure}
We define the \demph{standard permutahedron}  $\pi_d$
as the convex hull of the $d!$ permutations of the point $(1,2,\dots,d)$, that is, the standard permutahedron $\pi_d$ is defined by%
\footnote{The definition of standard permutahedron is not consistent within literature, e.g., 
Postnikov defines the standard permutahedron in a more general way: as the convex hull of all the points obtained by permuting the coordinates of an arbitrary point  \cite[Definition 2.1]{postnikov_permutohedra_2009}. 
}
 \begin{equation}
  \pi_d\coloneqq\conv\big\{(x_i)_{i\in [d]}\in \R^d \colon \{x_i\}_{i\in [d]}=[d]\big\} 
		\subset \R^d.
 \end{equation}
 \cref{abb:perm} shows some examples.  Note that the standard permutahedron is of dimension $d-1$ since all vertices are contained in a hyperplane with constant coordinate sum. 
 In our definition, standard permutahedra are integer polytopes.
 Other equivalent descriptions and references can be found in \cref{appendix}.
Let $\big(\R^d\big)^*$ be the dual vector space to $\R^d$. We identify
\begin{equation}
 \big(\R^d\big)^*=\R^{[d]}\coloneqq\{\text{maps }y\colon [d]\to\R\}
\end{equation}
and call the elements $y\in \R^{[d]}$ \demph{directions}.
Directions $y\in \R^{[d]}$ act as linear functionals on elements $x\in\R^d$ via
 \begin{equation}
  y(x)=\sum_{i=1}^d x_i y(i).
 \end{equation}
We will also exploit that primal and dual vector spaces are isomorphic.

For a direction $y\in\big(\R^d\big)^*$ we define the \demph{$y$-maximal face} $\Pol_y$ of a polytope $\Pol$ by
\begin{equation}
  \Pol_y\coloneqq\{x\in \Pol\ \colon\ y(x)\geq y(x')\quad \text{for all } x'\in \Pol\}.
 \end{equation}
 For a face $F$ of a polytope $\Pol$ define the \demph{open} and \demph{closed normal cone} 
 $N_\Pol^\circ(F)$ and $N_\Pol(F)$ to be the set of all direction that (strictly) maximize $F$ in $\Pol$, that is,
 \begin{equation}
\begin{split}
 N_\Pol(F)^\circ &\coloneqq \big\{y\in\big(\R^d\big)^* \ \colon\ \Pol_y=F\big\}\\
 N_\Pol(F) &\coloneqq \big\{y\in\big(\R^d\big)^* \ \colon\ \Pol_y\supseteq F\big\}.
\end{split}
\end{equation}
Collecting the normal cones $N_\Pol(F)$ of all faces $F$ of a polytope $\Pol$ defines the \demph{normal fan}
\begin{equation}
 \N(\Pol)\coloneqq \big\{N_\Pol(F)\ \colon\ F \text{ a face of }\Pol\big\}\,.
\end{equation}
See \cref{abb:permcon} for an example. 
Note that normal fans of polytopes form complete fans as defined in \Cref{ssec:polytopes&Ehrhart}.
 \begin{figure}
\centering
\begin{subfigure}{.47\textwidth}
  \includegraphics[trim=180 50 190 115, clip, width=\linewidth]{Permuta3}
  \caption{Standard permutahedron $\pi_{3}$}
\end{subfigure}\hfill
\begin{subfigure}{.47\textwidth}
  \includegraphics[trim=180 50 190 115, clip, width=\linewidth]{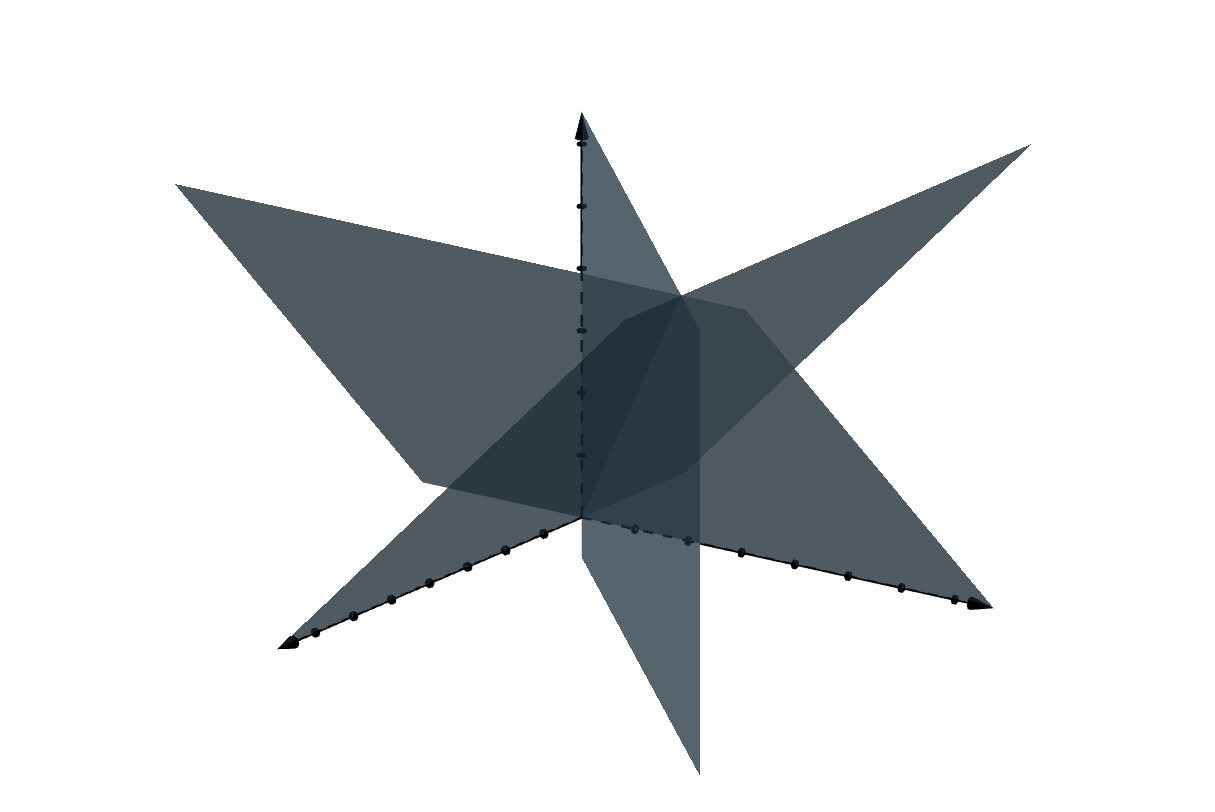}
  \caption{Braid arrangement $\B_{3}$}\label{abb:braid}
\end{subfigure}
 \caption{The standard permutahedron $\pi_{3}\subset \R^3$ (left) and the normal fan in $(\R^3)^*$ (right), where the intersection line is the normal cone $N_{\pi_{3}}(\pi_{3})$, the half hyperplanes are the normal cones of the edges and the full-dimensional cones are the normal cones of the vertices of $\pi_{3}$.
}%
\label{abb:permcon}
\end{figure}
The following is straightforward.
\begin{lem}\label{normalconerelations}
For a face $F$ of a polytope $\Pol\subset \R^d$ with dimension $\dim(F)=k$ the dimension of the normal cone is given by $\dim(\N_\Pol(F))=d-k=\codim(F)$. 
For another face $G$ of the polytope $\Pol$ we have  $F\subseteq G$ if and only if $N_\Pol(F)\supseteq N_\Pol(G)$.
\end{lem}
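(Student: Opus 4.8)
The plan is to prove the two assertions separately, using only the definitions of $\Pol_y$ and of the (open) normal cone together with the elementary polarity theory of polyhedral cones. Throughout I will use that every nonempty face $G$ of $\Pol$ equals $\Pol_y$ for some direction $y$ — this follows from the supporting-hyperplane definition of a face, together with $\Pol=\Pol_{0}$ — so that the relatively open normal cone $N_\Pol(G)^\circ$ is nonempty in every case.

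\emph{Order-reversal.} The implication ``$F\subseteq G\Rightarrow N_\Pol(F)\supseteq N_\Pol(G)$'' is immediate, since $\Pol_y\supseteq G$ forces $\Pol_y\supseteq F$. For the converse I would pick $y\in N_\Pol(G)^\circ$, so that $\Pol_y=G$; then $y\in N_\Pol(G)\subseteq N_\Pol(F)$ gives $G=\Pol_y\supseteq F$.

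\emph{Dimension.} Fix a point $x_0$ in the relative interior of $F$. The first step is the polarity description
\[
 N_\Pol(F)=\cone(\Pol-x_0)^{\vee}:=\bigl\{y\colon \langle y,z\rangle\le 0 \text{ for all }z\in\cone(\Pol-x_0)\bigr\},
\]
where $\cone(\Pol-x_0)$ is the closed, finitely generated cone spanned by $\Pol-x_0$: a direction $y$ lies in the right-hand side exactly when $\langle y,\cdot\rangle$ attains its maximum over $\Pol$ at $x_0$, and since $x_0\in\mathrm{relint}(F)$ such a functional is constant on $F$, hence maximal on all of $F$, i.e.\ $\Pol_y\supseteq F$ — and conversely. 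The second step is to identify the lineality space of $C:=\cone(\Pol-x_0)$ as $\spn(F-x_0)$: the inclusion ``$\supseteq$'' holds because $x_0\in\mathrm{relint}(F)$ makes $x_0\pm\varepsilon v\in F\subseteq\Pol$ for every $v\in\spn(F-x_0)$ and small $\varepsilon>0$; the inclusion ``$\subseteq$'' follows by taking $y_0\in N_\Pol(F)^\circ$ and noting that any $z$ with both $z,-z\in C$ satisfies $\langle y_0,z\rangle=0$ and has the form $\lambda(x-x_0)$ with $\lambda\ge 0$, $x\in\Pol$, which forces $x\in\Pol_{y_0}=F$, hence $z\in\spn(F-x_0)$. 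Thus the lineality space of $C$ has dimension $\dim\spn(F-x_0)=\dim F=k$, and the standard polarity identity ``$\dim(C^{\vee})=d-\dim(\text{lineality of }C)$'' (equivalently, $\spn(C^{\vee})$ is the orthogonal complement of the lineality space of $C$) yields $\dim N_\Pol(F)=d-k$, which is $\codim(F)$ by definition.

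The routine parts are the two set inclusions for the lineality space and the cited polarity identity for polyhedral cones. The only point I expect to need care is the equivalence underlying the polarity description of $N_\Pol(F)$ — specifically the step ``maximal at a relative interior point of $F$ $\Rightarrow$ constant on $F$'', which is exactly why $x_0$ must be chosen in $\mathrm{relint}(F)$ rather than as an arbitrary point of $F$; otherwise $\cone(\Pol-x_0)^{\vee}$ would record the faces meeting $F$ rather than those containing it, and both claims would fail.
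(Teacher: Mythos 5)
The paper does not actually prove this lemma; it introduces it with the remark ``The following is streightforward'' and states it without a proof environment. Your proof is correct and fills that gap in a standard way: the order-reversal part is exactly the routine argument, and the dimension part correctly reduces to the tangent cone $C=\cone(\Pol-x_0)$ at a relative-interior point $x_0$ of $F$, identifies $N_\Pol(F)=C^\vee$, computes the lineality space of $C$ as $\spn(F-x_0)$, and invokes the polar-cone identity $\dim(C^\vee)=d-\dim\bigl(\mathrm{lineality}(C)\bigr)$. Each step checks out, including the crucial observation that choosing $x_0$ in the relative interior (rather than an arbitrary point of $F$) is what makes ``$x_0\in\Pol_y$'' equivalent to ``$F\subseteq\Pol_y$'', and the use of a generic $y_0\in N_\Pol(F)^\circ$ to cut the lineality space down to $\spn(F-x_0)$. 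Since the paper gives nothing to compare against, there is no divergence of method to report; your argument is a reasonable and complete justification for what the author treats as folklore.
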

Recall that the codimension one fan $\N^{\co1}$ contains all cones of the complete fan $\N$ with codimension one.
This implies in particular that the codimension-one fan of the normal fan $\N^{\co1}(\Pol)$ of a polytope $\Pol$ defined in \eqref{def:codim1fan} can be described as
 \begin{equation}
 \begin{split}
   \N^{\co1}(\Pol) &= \N(\Pol)\setminus \big\{N_\Pol(v)\ \colon\ v \text{ vertex of }\Pol\big\}\\
   &= \big\{N_\Pol(F)\ \colon\ F \text{ a face of }\Pol \text{ with }\dim(F)\geq1\big\}.
 \end{split}
 \end{equation} 
The normal fan of the standard permutahedron has a nice description via the 
\demph{braid arrangement} $\B_d$, the hyperplane arrangement consisting of the finite set of hyperplanes $H_{ij}\coloneqq\{x\in\R^d\ \colon\ x_i=x_j\}$ for $i,j\in [d]$, $i\neq j$.
See \cref{abb:braid} for the example $\B_{3}$.
The connected components of $\R^d\setminus \bigcup \B_d$ are the \demph{(open) regions} of the  arrangement. 
The \demph{closed regions} of the braid arrangement are the topological closures of the open regions. They are polyhedral cones and their faces are the \demph{faces} of the braid arrangement, also called \demph{braid cones}. The braid cones can be described uniquely by compositions $[d]=T_1\uplus\dots\uplus T_k$ (\cref{lem:braid}). We therefore denote them by $\B_{T_1,\dots,T_k}$.
For more details about concepts on hyperplane arrangements see, for example, \cite{Stanley2007}.
The faces of the braid arrangement $\B_d$ form the \demph{braid fan} and the normal fan $\N(\pi_d)$  of the standard permutahedron $\pi_d$ is precisely the braid fan (see, for example, \cite[Section 4]{AA17}).

\begin{figure}
\centering
  \begin{subfigure}{.49\textwidth}
  \includegraphics[trim=180 50 190 115, clip, width=\linewidth]{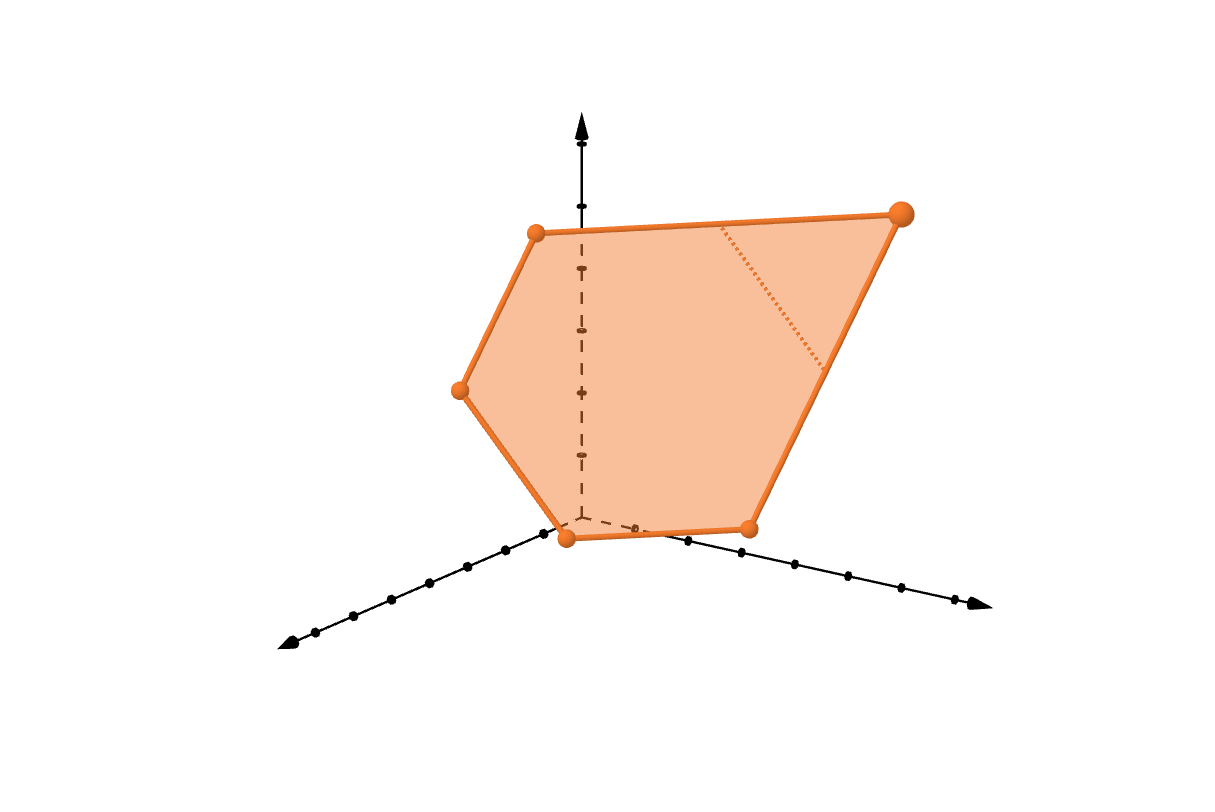}
\end{subfigure}\hfill
\begin{subfigure}{.49\textwidth}
  \includegraphics[trim=180 50 190 115, clip, width=\linewidth]{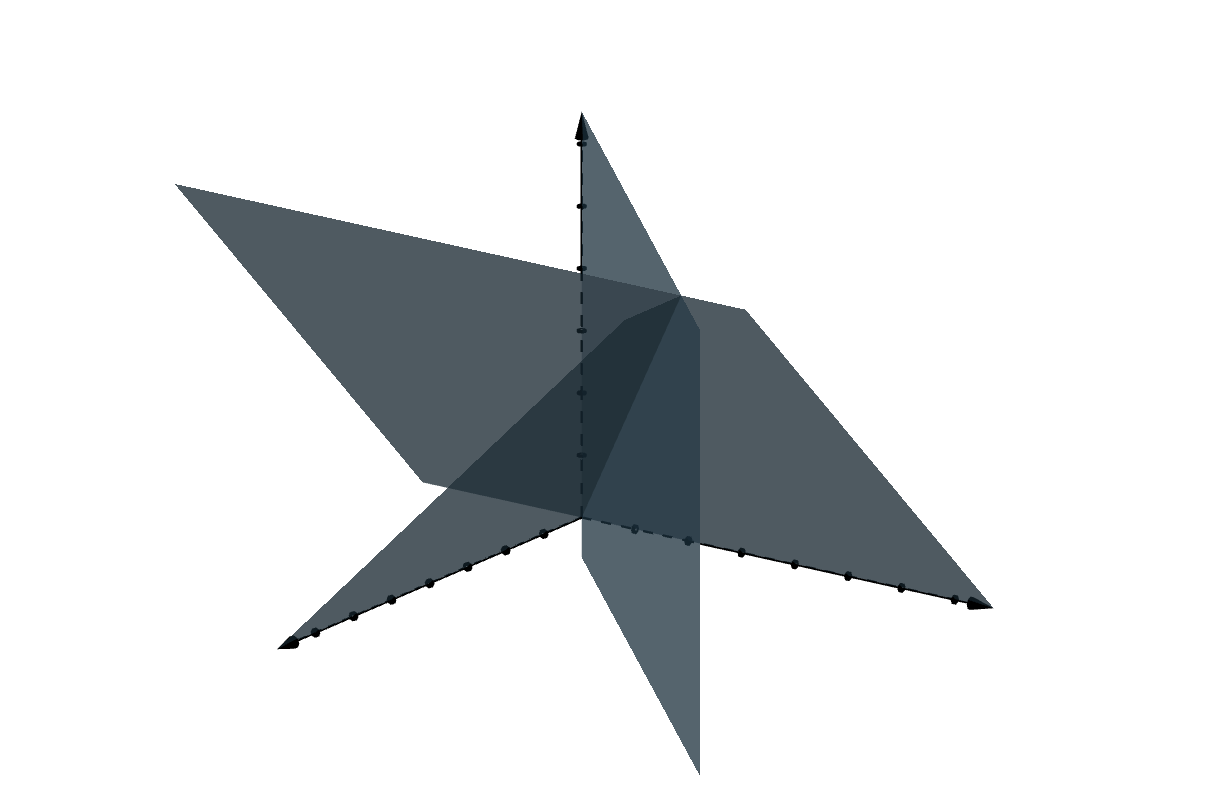}
\end{subfigure}
  \caption{
  A generalization $\Pol$ of the standard permutahedron $\pi_{3}$: here the up-right edge was moved outwards until it degenerated to a vertex. The normal cone of that ``new'' vertex is the union of the normal cones of the ``old'' degenerated edge and its adjacent vertices.
  }
  \label{fig:verallPerm}
 \end{figure}
We say a  fan $\N$ is a \demph{coarsening} of another  fan $\N'$ if every cone in $\N$ is the union of some cones in $\N'$.
A polytope $\Pol\subset \R^d$ is a \demph{generalized permutahedron} if its normal fan $\N(\Pol)$ is a coarsening of the normal fan $\N(\pi_d)$ of the standard permutahedron $\pi_d$, that is, it is a coarsening of the fan induced by the braid arrangement $\B_d$.
There are several equivalent definitions of generalized permutahedra (see, e.g., \cite{postnikov_faces_2006,castillo_deformation_2020,AA17,postnikov_permutohedra_2009} or \Cref{appendix}, where we, in particular,  provide  a self-contained proof of the hyperplane description of generalized permutahedra).

\subsection{Combinatorial reciprocity theorems for generalized permutahedra}\label{ssec:AgenP}
We restate the combinatorial reciprocity result for generalized permutahedra by \cite[Propositions~$17.3$ and $17.4$]{AA17}  in a slightly different language (see \cref{cor:genP} for the original statement) and prove it using Ehrhart theory.

\begin{thm}
\label{cor:genP2}
 Let $\Pol\subset\R^d$ be a generalized permutahedron and $m\in\Z_{>0}$. Then
 \begin{equation}
  \chi_d(\Pol)(m) \coloneqq \#\, \left(\Pol\text{-generic directions }  \y\in\big(\R^d\big)^*
			\text{ with }\y\in[m]^d \right)
 \end{equation}
 agrees with a polynomial in $m$ of degree $d$. Moreover,
 \begin{equation}
   (-1)^d \chi_d(\Pol)(-m) = \sum_{\y\in[m]^d} 
			 \# \left(\text{vertices of } \Pol_\y\right)\, .
 \end{equation}
\end{thm}

While we will extend \cref{cor:genP2} (and our proof)  in \cref{thm:moredim} and \cref{rem:beyond} below,  we provide a self-contained proof here to present a flavor of our method. 
In contrast to \cite{Billera2009, AA17, karaboghossian_combinatorial_2022} we will prove these results  without using any Hopf-algebraic method. 
Our proof gives a geometric point of view by counting integer points in  pruned inside-out cubes.
That is, we  will consider the cube 
\begin{equation}
 [1,m]^d\coloneqq\{\x\in\R^d\,\colon\, 1\leq x_i \leq m \text{ for }i=1,\dots, d \} \subset \R^d
\end{equation}
 and intersect it with the integer lattice: 
 \begin{equation}\label{eq:intgerpointsincube}
  [1,m]^d\cap \Z^d=\left\{\x\in\R^d\,\colon\,  x_i \in \{1,\dots,m\} \text{ for }i=1,\dots, d \right\}=\{1,\dots,m\}^d=[m]^d.
 \end{equation}
The same holds in the dual space $\big(\R^d\big)^*$.
Now, a direction $y\colon [d] \to [m]\in\big(\R^d\big)^*$ can be identified with an integer point $\y$ in the cube $\{1,\dots,m\}^d=[m]^d$ in the dual space. 
See \cref{abb:cubeCones}.
Before we start the proof of \Cref{cor:genP2} we need the following result.
\begin{lem}\label{lem:cube&braid}
 The intersections of the unit cube $[0,1]^d$ and the braid cones $\B_{T_1,\dots,T_k}$ for compositions $[d]=T_1\uplus\dots\uplus T_k$ are integer polytopes.
\end{lem}
\begin{proof}
 It is enough to consider the full-dimensional braid cones, since lower dimensional braid cones are faces of full-dimensional braid cones and faces of an integer polytope are integer polytopes.
 Full-dimensional braid cones $\B_{T_1,\dots,T_d}$ correspond to permutations of the coordinates, i.e., total orders on $[d]$. 
 Hence, we can think of the intersection $[0,1]\cap \B_{T_1,\dots,T_d}$ as an order polytope (see,  e.g.,  \cite[Definition~$1.1$]{Stanley1986}) of the total order on $[d]$ given by $T_1,\dots,T_d$. 
 Then Corollary~$1.3$ in \cite{Stanley1986} implies that the vertices of $[0,1]\cap \B_{T_1,\dots,T_d}$ have coordinates equal to either $0$ or $1$, hence they are integral.
\end{proof}
\begin{figure}
\centering
\begin{subfigure}[b]{.492\textwidth}
  \centering
  \includegraphics[trim=180 50 190 115, clip, width=\linewidth]{genPermuta3}
\end{subfigure}%
\begin{subfigure}[b]{.492\textwidth}
  \centering
  \includegraphics[trim=180 50 190 115, clip, width=\linewidth]{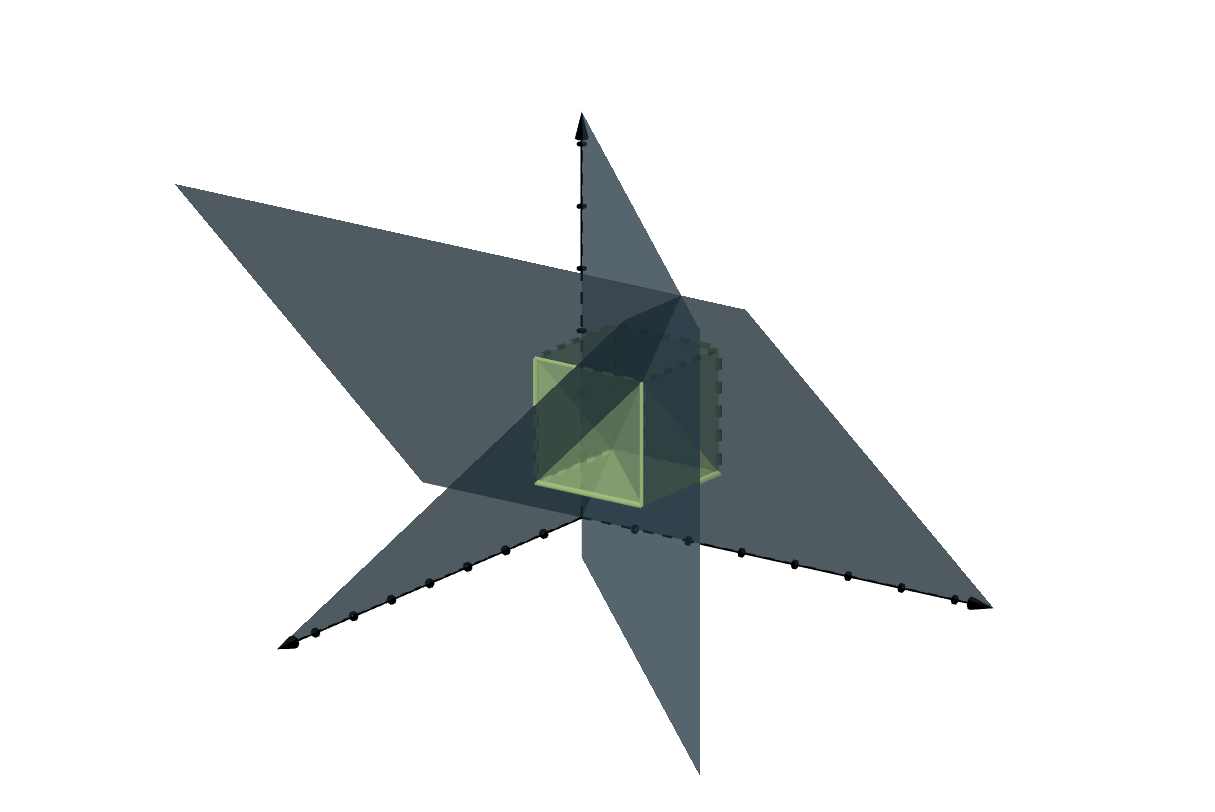}
\end{subfigure}
\caption{A generalized permutahedron in $\R^3$ (left) and its normal fan intersecting the cube $[1,2]^3$ (right).
}
\label{abb:cubeCones}
\end{figure}
\begin{proof}[Proof of \cref{cor:genP2}]
We will argue 
in the dual space $\big(\R^d\big)^*$ and its integer lattice; to simplify notation we will not always explicitly point that out.
Let us recall that $\y\in \big(\R^d\big)^*$ being $\Pol$-generic means that the $\y$-maximal face of $\Pol$ is a vertex, that is, $\y$ is contained in a full-dimensional cone of the normal fan $\N(\Pol)$. 
So the direction $\y$ is \emph{not} contained in any cone $N$ in the codimension-one fan $\N(\Pol)^{\co 1}$. Hence,
\begin{equation}
\begin{split}
 \chi_d(\Pol)(m) &=  \#\, \left(\Pol\text{-generic directions }  \y\in\big(\R^d\big)^*
			\text{ with }\y\in[m]^d \right)\\
		&= \#{\{\y\in [1,m]^d\cap\Z^d \ \colon\ \y\text{-maximum face $\Pol_\y$ is a vertex}\}}\\
		&= \#{\{\y\in [1,m]^d\cap\Z^d \ \colon\ \y\in N\in\N(\Pol) \text{ with } N \text{ full-dimensional} \}}\\
		&= \#{\{\y\in [1,m]^d\cap\Z^d \ \colon\ \y\notin N \text{ for all }N\in\N(\Pol) \text{ with codimension } \geq 1 \}}\\
		&=\#{\left(\,\left([1,m]^d\setminus \bigcup\N(\Pol)^{\co 1}\right)  \cap\Z^d  \,\right) }\\
		&= \Oc_{(0,1)^d,\N(\Pol)^{\co 1}}(m+1)\,,
\end{split}
\end{equation}
where we use in the last line that $[1,m]^d\cap \Z^d= (0,m+1)^d\cap \Z^d=(m+1)\cdot(0,1)^d\cap\Z^d$.
With \cref{lem:cube&braid} we know that the unit cube and the normal fan $\N(\Pol)$ intersect producing integer regions. 
Therefore, using \cref{thm:pioehr} and \cref{rem:pioehr}, polynomiality of $\chi_d(\Pol)(m) $ follows.
With the above equality and \cref{thm:pioehr}  at hand, we compute
\begin{equation}
\begin{split}
 (-1)^d\chi_d(\Pol)(-m) &= (-1)^d \Oc _{(0,1)^d,\N(\Pol)^{\co 1}}(-m+1)\\
		&= (-1)^d \Oc _{(0,1)^d,\N(\Pol)^{\co 1}}(-(m-1))\\
		&= \Ex_{[0,1]^d,\N(\Pol)^{\co 1}}(m-1)\\
		&= \sum_{\y\in \frac{1}{m-1}\Z^d} \mult_{[0,1]^d,\N(\Pol)^{\co1}}(\y) \\
		&= \sum_{\y\in\Z^d} \mult_{[0,m-1]^d,\N(\Pol)^{\co1}}(\y)\,.
\end{split}
\end{equation}
Every cone in the braid fan contains the line $L=\lambda (1,\dots,1)$.
Therefore, the fans $\N(\Pol)$ and $\N(\Pol)^{\co1}$ are invariant under  translations by vectors in the line $L$ and scaling.
So we can shift the cube $[0,m-1]^d$ to $[1,m]^d$ and this bijection not only preserves the number of integer points but also their multiplicities $\mult_{[1,m]^d,\N(\Pol)^{\co1}}$ with respect to the fan $\N(\Pol)$. Hence,
\begin{equation}
\begin{split}
 (-1)^d\chi_d(\Pol)(-m)
		&= \sum_{\y\in\Z^d} \mult_{[1,m]^d,\N(\Pol)^{\co1}}(\y) \\
		&=\sum_{\y\in[1,m]^d\cap\Z^d} 
					\# \left(\text{closed full-dimensional normal cones that contain }\y\right)\\
		&=\sum_{\y\in[m]^d} 
					\# \left(\text{closed normal cones of vertices that contain }\y\right)\\
		&=\sum_{\y\in[m]^d} \# \left(\text{vertices of } \Pol_\y\right)\,,
\end{split}
\end{equation}
where we make use of \Cref{normalconerelations}.
\end{proof}

We can extend \cref{cor:genP2} above to faces of arbitrary dimension.
\begin{thm}\label{thm:moredim}
 For a generalized permutahedron $\Pol\subset \R^d$ and $k=0,\dots,d-1$,
\begin{equation}
\begin{split}
 \chi_{d,k}(\Pol)(m) \coloneqq& \ \#{\{\y\in [m]^d \ \colon\ \y\text{-maximum face $\Pol_\y$ is a $k$-face}\}}
\end{split}
\end{equation}
 agrees with a polynomial of degree $d-k$, and
\begin{equation}
\begin{split}
 (-1)^{d-k}\chi_{d,k}(\Pol)(-m) &= \sum_{\y\in[m]^d} 
			 \# \left(k\text{-faces of } \Pol_\y\right).
\end{split}
\end{equation}
\end{thm}
Before we prove the theorem
we extend the notion of codimension-one fans to arbitrary dimensions by defining the \demph{codimension-$\mathbf{k}$ fan} $\N^{\co k}$ as
\begin{equation}
\begin{split}
	\N^{\co k}\coloneqq& 
	\{N\in\N(\Pol)\,\colon\, \codim(N)\geq k \},
\end{split}
\end{equation}
that is, for a polytope $\Pol$, 
\begin{equation}
 \N(\Pol)^{\co k} = \{N_\Pol(F)\ \colon\ F \text{ a face of }\Pol \text{ with }\dim(F)\geq k\}\,.
\end{equation}
For a polytope $\Q\subset \R^d$ and $k\geq 0$ we define the \demph{$k$-pruned inside-out polytope} as
\begin{equation}
\begin{split}
  \left(\Q\cap \bigcup \N^{\co k}  \right) \setminus \left(\bigcup \N^{\co k+1}\right)
  =\Q\cap\biguplus \left\{N^\circ\,\colon\, N\in\N^{\co k} \right\} \,.
\end{split}
\end{equation}
Note this is consistent with the notation in the beginning of this section.
As before, for a polytope $\Q\subset \R^d$ the open $k$-pruned inside-out polytope $\left(\Q^\circ\cap \bigcup \N^{\co k}  \right) \setminus \left(\bigcup \N^{\co k+1}\right) $ is the disjoint union of relatively open $(d-k)$-dimensional polytopes, 
namely, the intersection of $\Q^\circ$ with the relatively open cones in $\N$ of codimension $k$.
\begin{proof}[Proof of \Cref{thm:moredim}.]
We compute
\begin{equation}
\begin{split}
 \chi_{d,k}(\Pol)(m) &= \ \#{\big\{\y\in [1,m]^d\cap\Z^d \ \colon\ \y\text{-maximum face $\Pol_\y$ is a $k$-face}\big\}}\\
		&=\#{\Big(\Big(\Z^d\cap(0,m+1)^d\cap \bigcup\N(\Pol)^{\co k} \Big) \setminus 
						\Big(\bigcup\N(\Pol)^{\co k+1} \Big)\Big)}\\
		&=\#\Big(\Big(\biguplus_{\substack{N\in\N(\Pol)\\ \dim N=d-k}} N^\circ \cap (0,m+1)^d\Big) \cap \Z^d  \Big)\,.
% 		&= \sum_{\substack{N\in\N(\Pol)\\ \dim N=d-k}} \ehr_{N^\circ\cap(0,1)}(m+1)\,,
\end{split}
\end{equation}
The intersection $N^\circ \cap (0,m+1)^d$ is the relative interior of a polytope. 
Moreover, since $N^\circ$ is an open cone containing the origin,
$N^\circ \cap (0,m+1)^d$ is the $(m+1)$\textsuperscript{st} dilate of $N^\circ \cap (0,1)^d$.
Hence,
\begin{equation}\label{eq:sumofehrkdim}
 \chi_{d,k}(\Pol)(m)
 =\#\Big(\Big(\biguplus_{\substack{N\in\N(\Pol)\\ \dim N=d-k}} N^\circ \cap (0,m+1)^d\Big) \cap \Z^d  \Big)
 = \sum_{\substack{N\in\N(\Pol)\\ \dim N=d-k}} \ehr_{N^\circ\cap(0,1)^d}(m+1)\,.
\end{equation}
Using again \cref{lem:cube&braid} and Ehrhart's \cref{thm:ehr} we obtain polynomiality for $ \chi_{d,k}(\Pol)(m)$.

With Ehrhart--Macdonald reciprocity (\cref{thm:EhrMac}) we  compute
\begin{align}
 (-1)^{d-k}\chi_{d,k}(\Pol)(-m)&=(-1)^{d-k} \sum_{\substack{N\in\N(\Pol)\\ \dim N=d-k}} \ehr_{N^\circ\cap(0,1)^d}(-m+1)\\
	&=\sum_{\substack{N\in\N(\Pol)\\ \dim N=d-k}} (-1)^{d-k}  \ehr_{N^\circ\cap(0,1)^d}(-(m-1))\\
	&=\sum_{\substack{N\in\N(\Pol)\\ \dim N=d-k}} \ehr_{N\cap[0,1]^d}(m-1)\label{eq:kclosederhhartsum}\\ 
	&=\sum_{\substack{N\in\N(\Pol)\\ \dim N=d-k}} \#\big(N\cap[0,m-1]^d\cap \Z^d\big)\,.
\end{align}
Here we use, as in the proof of \cref{cor:genP2}, that the normal fan of a generalized permutahedron is a coarsened braid fan and therefore is invariant under scaling and shifts by $\lambda (1,\dots,1)$ for  $\lambda\in \R$.
So,
\begin{equation}
\begin{split}
 (-1)^{d-k}\chi_{d,k}(\Pol)(-m)
	&=\sum_{\substack{N\in\N(\Pol)\\ \dim N=d-k}} \#\left(N\cap[1,m]^d\cap \Z^d\right)\\
	&=\sum_{\substack{\y\in[1,m]^d\cap \Z^d}} \#\left( (d-k)\text{-dimensional cones }N\in\N(\Pol) \text{ that contain }\y  \right)\\
	&= \sum_{\y\in[m]^d} \# \left(k\text{-faces of } \Pol_\y\right)\,,
\end{split}
\end{equation}
 applying \cref{normalconerelations} in the last equality.
\end{proof}
\begin{rem}\label{rem:beyond}
At the heart of the proofs of \Cref{cor:genP2} and \Cref{thm:moredim} lie  sums of Ehrhart polynomials and  the reciprocity results are applications of Ehrhart-Macdonald reciprocity (\cref{thm:EhrMac}):
Recall \eqref{eq:sumofehrkdim} and \eqref{eq:kclosederhhartsum} from the proof of \cref{thm:moredim}.
One can see that for a generalized permutahedron $\Pol$ any combination of Ehrhart polynomials as in \eqref{eq:sumofehrkdim} and \eqref{eq:kclosederhhartsum} results in a polynomial counting function
\begin{equation}\label{eq:weighted}
\begin{split}
  \chi_{d,\boldsymbol{\alpha}}(\Pol)(m)
  \coloneqq& \sum_{\substack{N\in\N(\Pol)}} \alpha_N \ehr_{N^\circ\cap(0,1)^d}(m+1)\\
     =& \sum_{\substack{F \text{ a face of }\Pol}} \alpha_{N_\Pol(F)} \ehr_{N_\Pol(F)^\circ\cap(0,1)^d}(m+1)
 \end{split}
\end{equation}
for coefficients $\alpha_N$. This provides a combinatorial reciprocity result
\begin{equation}
\begin{split}
 \chi_{d,\boldsymbol{\alpha}}(\Pol)(-m)&=\sum_{\substack{N\in\N(\Pol)}} (-1)^{\dim N} \alpha_N \ehr_{N\cap[0,1]^d}(m-1)\\
 &=\sum_{\substack{F \text{ a face of }\Pol}} (-1)^{d-\dim F} \alpha_F \ehr_{N_\Pol(F)\cap[0,1]^d}(m-1) \,.
 \end{split}
\end{equation}
 \Cref{thm:moredim} (and therefore also \Cref{cor:genP2}) is a reformulation of this general result with coefficients
\begin{equation}
 \alpha_N=\begin{cases}
           1 &\text{ if }\dim N =d-k\\
           0 &\text{ else}
          \end{cases}
          \qquad\text{ for } k=0,1,\dots,d-1\,.
\end{equation}
\end{rem}
\begin{rem}\label{rem:typeB}
 We observe that we used the following properties of generalized permutahedra in the proofs of \cref{cor:genP2} and \cref{thm:moredim} 
\begin{enumerate}[nolistsep]
 \item the intersection of the unit cube and the normal fan of a generalized permutahedron form integer pruned inside-out polytopes,\label{integral}
 \item every cone $N$ in the normal fan $\N(\Pol)$ of a generalized permutahedron $\Pol$ contains the line  $L=\left\{\lambda(1,\dots,1)\,\colon\, \lambda\in\R \right\}$.\label{lineality}
\end{enumerate}
The first property \ref{integral} can be weakened to rational intersections leading to a quasipolynomiality result.
Considering normal fans without property \ref{lineality} produces  similar but more complicated statements, since the shift of the cube $[0,m-1]^d$ to the cube $[1,m]^d$ can not be performed in general.
Nevertheless, the framework of pruned inside-out polytopes can be applied to generate reciprocity results for generalized permutahedra in other types (see, e.g., \cite{ardila_coxeter_2020}).
This will be explored in a future paper.
\end{rem}

\subsection{Relation to polynomial invariants from  Hopf monoids}\label{ssec:relation}
In this section we compare our results to the polynomial invariants from Hopf monoids developed in \cite{AA17, karaboghossian_combinatorial_2022}. 
This paper was motivated by giving a geometric interpretation of the combinatorial reciprocity theorems in \cite{AA17}.

\label{ssec:notation}%take label with notation paragraphs for reference in appendix
In the Hopf--algebraic setting it is convenient to work with vector spaces with unordered base. We briefly introduce the notation, which we also use in \cref{Assec:hg}.
For a non-empty finite set $I$ let $\R I $ be the real vector space with distinguished, unordered basis $I$.
The elements $i\in I$ with are denoted $b_i$ when we want to distinguish the elements $i$ in the set $I$ from the corresponding basis vector $b_i$ in the vector space $\R I$.
Moreover, we identify an element $\sum_{i\in I}x_i b_i$ in the vector space $\R I$ with the tuple $(x_i)_{i\in I}$ for  $x_i\in \R$.
For the disjoint union $I=S\uplus T$ of two finite sets $S, T$ the equality $\R S\times \R T=\R I=\R T\times \R S$ holds, which is handy in combinatorial contexts.
Similarly, the dual vector space $(\R I)^*$ can be  interpreted as
\begin{equation}
 (\R I)^*=\R^I\coloneqq\{\text{maps }y\colon I\to\R\}\,.
\end{equation}
Recall that the elements $y\in \R^I$ are called directions.
They act as linear functionals on elements $x=\sum_{i\in I} x_ib_i\in\R I$ via
 \begin{equation}
  y\Big(\sum_{i\in I} x_ib_i\Big)=\sum_{i\in I} x_i y(i).
 \end{equation}
 For a finite set $I$ with $\lvert I\rvert \eqqcolon d$ we can identify $\R^d \cong\R I$ and  $\R^I\simeq \big(\R^d\big)^*$ by fixing a bijection $\sigma\colon I\to [d]\coloneqq\{1,\dots,d\}$. Via this bijection we may also assume $I=[d]$.
 In the context of this paper those two notations can be used interchangeably. 

An introduction to the theory of Hopf monoids can be found in, e.g., \cite{AA17}, \cite{aguiar_monoidal_2010} and is omitted here.
For a Hopf monoid on the ground set $I$, a character $\zeta$, and an element $x$ in the Hopf monoid, there is a polynomial invariant 
\begin{equation}
 \chi_I^\zeta(x)(m)\coloneqq \sum_{I=S_1\sqcup\dots\sqcup S_m} \left(\zeta_{S_1}\otimes\dots\otimes\zeta_{S_m}\right)\circ\Delta_{S_1,\dots,S_m}(x) \,,
\end{equation}
where the sum is over all compositions and $\Delta$ denotes the coproduct of the Hopf monoid.
Using the antipode $s_I$ of the Hopf monoid one obtains the reciprocity relation
\begin{equation}
 \chi_I^\zeta(x)(-m)=\chi_I\left(s_I\left(x\right)\right)\left(m\right)
\end{equation}
which gives an interpretation for negative integers \cite[Section 16]{AA17}.
In \cite{AA17} Aguiar and Ardila define a Hopf monoid structure on the species of generalized permutahedra and then obtain combinatorial formulas for the polynomial invariant $\chi_I(x)(m)$ and $\chi_I(x)(-m)$ for $m\in\Z_{>0}$ using the basic character, which takes values in $\{0,1\}$.
\begin{thm}[{\cite[Propositions~$17.3$ and $17.4$]{AA17}}]\label{cor:genP}
 At a positive integer $m\in\Z_{>0}$ the basic polynomial invariant $\chi$ of a generalized permutahedron $\Pol\subset \R I$ is given by
 \begin{equation}
  \chi_I(\Pol)(m) = \#\, \left(\Pol\text{-generic directions }  y\colon I \to [m]\right)
 \end{equation}
and 
 \begin{equation}
   (-1)^{\lvert I\rvert} \chi_I(\Pol)(-m) = \sum_{y\colon I \to [m]} \# \left(\text{vertices of } \Pol_y\right)\, .
 \end{equation}
\end{thm}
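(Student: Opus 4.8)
The plan is to exhibit $\chi_I(\Pol)$ as an inner pruned Ehrhart function of a cube with respect to the normal fan of $\Pol$, and then to read off both identities from \cref{thm:pioehr}. First I would unpack the definition of the basic polynomial invariant of the Hopf monoid of generalized permutahedra: a map $\y\colon I\to[m]$ corresponds to an ordered decomposition of $I$ into $m$ (possibly empty) blocks, under which the corresponding summand of $\chi_I(\Pol)(m)$ records whether the $\y$-maximal face $\Pol_\y$ cut out by that decomposition is a vertex, contributing $1$ if so and $0$ otherwise. Hence $\chi_I(\Pol)(m)=\#\{\y\colon I\to[m]\ \colon\ \Pol_\y\text{ is a vertex}\}$. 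By \cref{normalconerelations} the normal cone of a vertex is full-dimensional, so $\Pol_\y$ is a vertex precisely when $\y$ avoids every lower-dimensional normal cone of $\Pol$, i.e.\ $\y\in\R^I\setminus\bigcup\N^{\co 1}(\Pol)$; thus $\chi_I(\Pol)(m)$ counts the lattice directions of $[m]^I$ lying in the pruned inside-out polytope built from a cube and the normal fan of $\Pol$.

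To turn this into a dilation I would fix a bijection $\sigma\colon I\to[d]$, set $\Q\coloneqq[0,1]^d$, and take $\N\coloneqq\N(\Pol)$, which is a rational complete fan since it coarsens the braid fan. As cones are invariant under positive scaling, $t\cdot\N^{\co 1}=\N^{\co 1}$, and $[m]^I=\{1,\dots,m\}^d=\Z^d\cap(m+1)\Q^\circ$, whence
\[
  \chi_I(\Pol)(m)=\Oc_{\Q^\circ,\N^{\co 1}}(m+1).
\]
Next I would check that each closed region $\Q\cap N$, for $N$ a full-dimensional cone of $\N$, is a lattice polytope: every facet-defining hyperplane of $N$ has the form $\{x_i=x_j\}$ (the walls of the braid fan refine those of $\N$), so a vertex of $\Q\cap N$ is cut out by $d$ independent equations among $x_k=0$, $x_k=1$, and $x_i=x_j$, and any such uniquely solvable system has all coordinates in $\{0,1\}$. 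By \cref{rem:pioehr} this forces $\Oc_{\Q^\circ,\N^{\co 1}}$ and $\Ex_{\Q,\N^{\co 1}}$ to be genuine polynomials of degree $d=\lvert I\rvert$, giving the claimed degree. (Alternatively, one may invoke that the Hopf-theoretic $\chi$ is a priori a polynomial, since a quasipolynomial agreeing with a polynomial on $\Z_{>0}$ must equal it.)

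For the reciprocity, \cref{thm:pioehr} gives $(-1)^{d}\Oc_{\Q^\circ,\N^{\co 1}}(-t)=\Ex_{\Q,\N^{\co 1}}(t)$. Since $\chi_I(\Pol)$ and $t\mapsto\Oc_{\Q^\circ,\N^{\co 1}}(t+1)$ are polynomials agreeing on $\Z_{>0}$ they coincide, so $\chi_I(\Pol)(-m)=\Oc_{\Q^\circ,\N^{\co 1}}(-(m-1))$; combining this with the reciprocity identity at $t=m-1$ yields
\[
  (-1)^{\lvert I\rvert}\chi_I(\Pol)(-m)=\Ex_{\Q,\N^{\co 1}}(m-1)=\sum_{\y\in\{0,\dots,m-1\}^d}\#\big(\text{full-dimensional cones of }\N(\Pol)\text{ containing }\y\big).
\]
A full-dimensional cone of $\N(\Pol)$ is the normal cone $N_\Pol(v)$ of a vertex $v$, and $\y\in N_\Pol(v)$ iff $v\in\Pol_\y$, so the inner count equals $\#(\text{vertices of }\Pol_\y)$. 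It remains to change the index set from $\{0,\dots,m-1\}^I$ to $[m]^I$: since $\Pol$ lies in a hyperplane of constant coordinate sum (it is the base polytope of a submodular function), the functional $\y+\1$ differs from $\y$ by a constant on $\Pol$, hence $\Pol_{\y+\1}=\Pol_\y$; therefore $\y\mapsto\y+\1$ is a bijection $\{0,\dots,m-1\}^I\to[m]^I$ preserving $\#(\text{vertices of }\Pol_\y)$, and the sum becomes $\sum_{\y\colon I\to[m]}\#(\text{vertices of }\Pol_\y)$, as required.

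I expect the only real friction to be bookkeeping, not conceptual: keeping the dilation factors straight (the direct count is $\Oc$ at $m+1$, the reciprocal count is $\Ex$ at $m-1$) and performing the $\1$-shift so as to land exactly on $[m]^I$, together with the elementary but easily-botched verification that each $\Q\cap N$ is a lattice polytope so that the two pruned Ehrhart functions are honest polynomials. Everything else is a direct application of \cref{thm:pioehr} and \cref{rem:pioehr} once the combinatorial invariant has been translated into a pruned inside-out lattice count.
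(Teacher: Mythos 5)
Your proposal is correct and follows essentially the same route as the paper's proof of the restated \cref{cor:genP2}: identify $\chi_I(\Pol)(m)$ with $\Oc_{(0,1)^d,\N(\Pol)^{\co 1}}(m+1)$, invoke \cref{thm:pioehr} and \cref{rem:pioehr}, and then shift $\{0,\dots,m-1\}^d$ to $[m]^d$ using invariance along the $\1$-direction. The only variation is that you verify integrality of the regions $\Q\cap N$ by a direct linear-algebra argument on the defining hyperplanes $x_k=0$, $x_k=1$, $x_i=x_j$, whereas the paper reduces this to the fact that order polytopes of chains are $0/1$-polytopes (\cref{lem:cube&braid}); both are fine.
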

This result was obtained earlier but stated differently by Billera, Jia, and Reiner  using a similar Hopf-algebraic approach (using the antipode) on quasisymmetric functions and matroids \cite[Theorem~9.2.~(v)]{Billera2009}.
We have seen in \Cref{ssec:AgenP} how this result can be understood using pruned inside-out cubes.
Recently, \cref{cor:genP} was generalized in 
\cite{karaboghossian_combinatorial_2022}.
\begin{thm}[{\cite[Theorem~$2.5$ and Theorem~$2.8$]{karaboghossian_combinatorial_2022}}]\label{thm:hopfreci}
 Let $\zeta$ be a character of the Hopf monoid of generalized permutahedra $GP$, $I$ a finite set and $\Pol\in GP[I]$ a generalized permutahedron. Then,
 \begin{equation}
  \chi_I^\zeta(\Pol)(m)=\sum_{F\text{ a face of }\Pol} \zeta(F)\lvert N_\Pol^\circ(F)_m\rvert\,,
 \end{equation}
 and
  \begin{equation}
  \chi_I^\zeta(\Pol)(-m)=\sum_{F\text{ a face of }\Pol} (-1)^{\lvert I\rvert-\dim F} \zeta(F)\lvert N_\Pol(F)_m\rvert\,,
 \end{equation}
\end{thm}
Here elements in the sets $\N_\Pol^\circ(F)_m= [m]^I\cap \N_\Pol^\circ(F)$ and $\N_\Pol(F)_m= [m]^I\cap\N_\Pol(F)$ are called the colorings $c\colon I\to[m]$ that are \demph{strictly compatible}, respective \demph{compatible} with $F$.
Those can easily be understood as the integer points in the open normal cone of the face $F$ intersected with the $(m+1)$\textsuperscript{st} dilate of the open unit cube $(0,1)^I$, so
$\lvert N_\Pol^\circ(F)_m\rvert$ agrees with the Ehrhart polynomial $\ehr_{N_\Pol(F)^\circ\cap(0,1)^d}(m+1)$.
Similarly, the set $N_\Pol(F)_m$ can be recognized as the integer points in the closed  normal cone of the face $F$ intersected with the closed cube $[1,m]^I$. 
After shifting the cube as in the proof of \Cref{thm:moredim}, we can see that $\lvert N_\Pol(F)_m\rvert$ agrees with the Ehrhart polynomial of $N_\Pol(F)\cap [0,1]^I$.
Hence, the polynomial invariants can be interpreted as sums of Ehrhart polynomials that are weighted by the character $\zeta(F)$, compare \cref{rem:beyond}.

With a view towards applications our  Ehrhart-theoretic approach has some advantages.
One strength is that the weights in \eqref{eq:weighted} can be chosen arbitrarily, while in the Hopf-theoretic setting the character needs to fulfill certain axioms.
This, for example, does not allow to interpret the combinatorial reciprocity result in \Cref{thm:moredim} as an instance of \Cref{thm:hopfreci}. A character taking value one on $k$-dimensional faces and zero elsewhere would not fulfill compatibility with multiplication in the Hopf monoid of generalized permutahedra.
Another advantage will be the extension to  generalized permutahedra in other types, as mentioned before (\Cref{rem:typeB}).
This seems to be  very hard from the Hopf monoid setting (see, e.g., \cite[Theorem~$6.1$]{AA17}, \cite[Section~$9$]{ardila_coxeter_2020}).

\subsection{Hypergraphs and their polytopes}\label{Assec:hg}
Generalized permutahedra are an especially interesting class of polytopes, due to their many interesting combinatorial subclasses such as graphical zonotopes, matroid polytopes, hypergraphic polytopes, and many more.
In this section we illustrate this fruitful connection between combinatorics and geometry 
proving a combinatorial reciprocity result for hypergraphs,
which generalizes Stanley's famous theorem about the chromatic polynomial for graphs.
Aval, Karaboghossian, and Tanasa use a  Hopf-theoretic ansatz similar to that of Ardila and Aguiar to derive the reciprocity theorem for hypergraph colorings \cite{Aval2020}. 
They define a basic polynomial invariant on hypergraphs and give combinatorial interpretations. 
A general version of this can be found in \cite{karaboghossian_combinatorial_2022}. 
For convenience we demonstrate the technique for a special case of orientation, that we call heading.
We give another perspective and proof by applying \cref{cor:genP} (reciprocity for generalized permutahedra) and exploiting geometric and combinatorial properties of the hypergraph and its associated polytope.
This approach is also described  as alternative proof for the general case in \cite{karaboghossian_combinatorial_2022}%
\footnote{There also seems to be a polytopal approach by Alexander Postnikov, mentioned in \cite[Acknowledgments]{Aval2020} and on \url{http://math.mit.edu/~apost/courses/18.218_2016/} (Lecture 19. W 03/16/2016), but to the best of our knowledge no reference is available.}.
 A \demph{hypergraph} $h=(I,E)$ is a pair of a finite set  $I$  of \demph{nodes}%
 \footnote{We decided to use the less common term \emph{nodes} for hypergraphs to distinguish them from the \emph{vertices} of a polytope.}
 and a finite multiset $E$ of non-empty subsets $e\subseteq I$  called \demph{hyperedges}.
Note that we allow multiple edges and edges consisting of only one node.
For simplicity we will often assume without loss of generality that the node set $I$ equals $\{1,\dots,d\}=[d]$ for $d=\lvert I\rvert$, since all the claims in this section are invariant under relabeling the set $I$. 
In a similar fashion we might switch back and forth between the two vector space notations $\R I\simeq \R^d$ and $\R^I\simeq\big(\R^d\big)^*$ (see \cref{ssec:notation}). 

 For every hypergraph $h$ we define the corresponding \demph{hypergraphic polytope} $\Pol(h)\subset \R I$  as the following Minkowski sum of simplices:
\begin{equation}
 \Pol(h)=\sum_{e\in E}\Delta_{e} \ \subset \R I
\end{equation}
where
 \begin{equation}
  \Delta_{e}= \conv\{b_i\ \colon\ i\in e\},\quad 
  \text{for a hyperedge }e\subseteq I\, 
 \end{equation}
 and $b_i$ are the basis vectors for $i\in I$.
An example is depicted in \cref{abb:hgPolyMink}. Hypergraphic polytopes have been studied (sometimes as Minkowski sum of simplices) in, e.g., \cite{agnarsson_special_2017,Benedetti2019}.
Hypergraphs are in bijection with hypergraphic polytopes  and they form a subclass of generalized permutahedra (see \cref{appendix} or, e.g., \cite[Proposition~6.3.]{postnikov_permutohedra_2009}).

\begin{figure}
 \includegraphics[width=\textwidth]{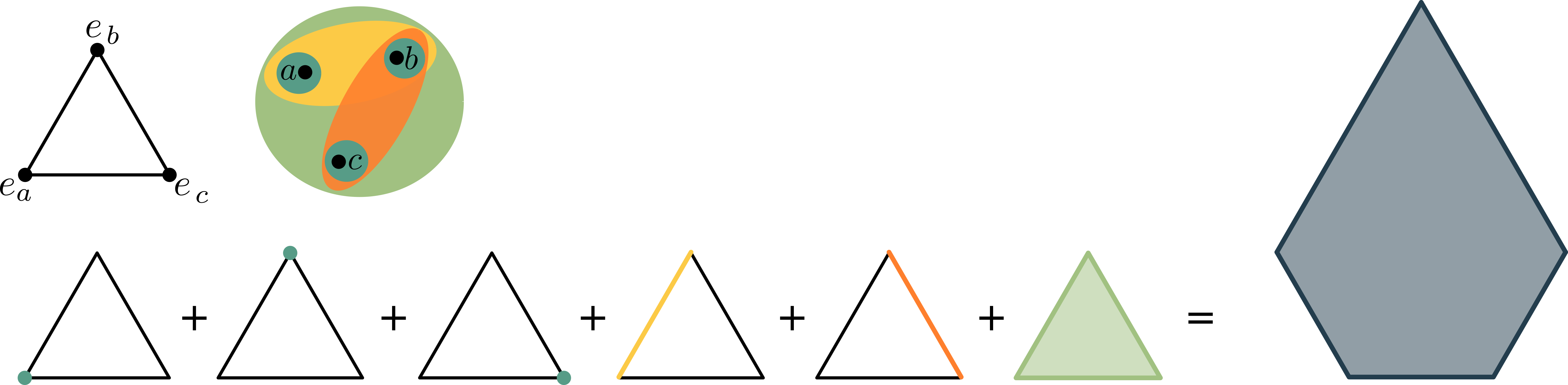}
 \caption{The hypergraph $h=\left(\left\{a,b,c\right\},\left\{\left\{a,b,c\right\},\left\{a,b\right\},\left\{b,c\right\},\left\{a\right\},\left\{b\right\},\left\{c\right\}\right\}\right)$ and its hypergraphic polytope $\Pol(h)$.}\label{abb:hgPolyMink}
\end{figure}

The vertices of graphic polytopes are described by the acyclic orientations of the corresponding graph \cite[Corollary 4.2]{Zaslavsky1991}. 
We will give an analogous statement and proof for hypergraphic polytopes. 
In order to do so we need the subsequent definitions following%
\footnote{Some of the definitions are also mentioned by Postnikov (\url{http://math.mit.edu/~apost/courses/18.218_2016/} Problem set 2, Problem 6).}
\cite{Aval2020}. 
 A \demph{heading}%
 \footnote{We have chosen to call this generalization of orientations \emph{heading} to distinguish it from other definitions of orientations for hypergraphs.}%
 $\Or$ of a hypergraph $h=(I,E)$ is a map $\Or\colon E\to I$ 
 such that for every hyperedge $e\in E$ we have $\Or(e)\in e$. 
 In other words the heading  $\Or$ picks for every hyperedge $e$ a node $i=\Or(e)\in e$ within that hyperedge. 
 We will call that node $\Or(e)$ the \demph{head of the hyperedge} $e$.
 An \demph{oriented cycle} in a heading $\Or$ of a hypergraph $h$ is a sequence $e_1,\dots, e_\ell$ of
 hyperedges such that
 \begin{equation}
\begin{split}
  &\Or(e_1)\in e_2\setminus \Or(e_2)\\
  &\Or(e_2)\in e_3\setminus \Or(e_3)\\
   &\qquad\quad\vdots\\ 
   &\Or(e_{\ell-1})\in e_{\ell-1}\setminus \Or(e_{\ell-1})\\
   &\Or(e_\ell)\in e_1\setminus \Or(e_1).
\end{split}
\end{equation}
A heading $\Or$ of a hypergraph $h$ is called \demph{acyclic} if it does not contain any oriented cycle. 
See \cref{abb:hg_def} for some examples.
Note that the notions of heading and acyclic here are  special cases of the notions in \cite{Benedetti2019, karaboghossian_combinatorial_2022, Reff2012, Rusnak2013}.
\begin{figure}
 \includegraphics[width=\textwidth]{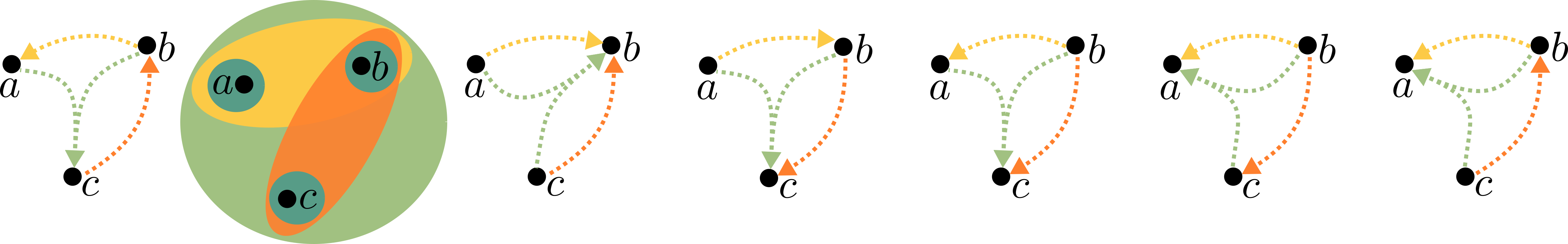}
 \caption{The hypergraph $h=\left(\left\{a,b,c\right\},\left\{\left\{a,b,c\right\},\left\{a,b\right\},\left\{b,c\right\},\left\{a\right\},\left\{b\right\},\left\{c\right\}\right\}\right)$ with a cyclic heading (left) and all its acyclic headings (right).}
 \label{abb:hg_def}
\end{figure}

The following  description of the vertices of the hypergraphic polytope in terms of acyclic orientations  plays a central role in the remainder of this paper
and is a particular instance of, e.g., \cite[Theorem~$2.18.$]{Benedetti2019}.
 \cref{prop:hgPconv} was stated without proof in  \cite{CardinalJean2018}.
 For convenience we give an elementary proof generalizing the proof idea for graphs presented in \cite{CardinalJean2018}.
\begin{prop}\label{prop:hgPconv}
 For a hypergraph $h=(I,E)$ the hypergraphic polytope $\Pol(h)$ can be described as
 \begin{equation}
  \Pol(h)=\conv\{\ \delta(\Or)\in \R I \ \colon\ \Or 
  \text{ is an \emph{acyclic} heading of }h \}
 \end{equation}
 where 
 \begin{equation}
  \delta(\Or)_i = \lvert \Or^{-1}(i)\rvert \quad \text{for }i\in I,
 \end{equation}
 i.e., $\delta (\Or)\in\R I$ is the vector of in-degrees of the nodes $i\in I$ 
 in the heading $\Or$.
\end{prop}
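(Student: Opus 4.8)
The plan is to prove the stated set equality by showing separately that $\Pol(h)$ contains, and is contained in, the convex hull $\conv\{\delta(\Or)\colon \Or\text{ an acyclic heading of }h\}$. The first containment is immediate: for \emph{any} heading $\Or$ we have $\delta(\Or)=\sum_{e\in E}b_{\Or(e)}$ with each summand $b_{\Or(e)}\in\Delta_e$, hence $\delta(\Or)\in\sum_{e\in E}\Delta_e=\Pol(h)$, and $\Pol(h)$ is convex. The content lies in the reverse containment; since a polytope is the convex hull of its vertices, it is enough to show that every vertex of $\Pol(h)$ has the form $\delta(\Or)$ for some \emph{acyclic} heading $\Or$.

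To that end I would fix a vertex $v$ of $\Pol(h)$ and a direction $\y\in\R^I$ with $\Pol(h)_\y=\{v\}$. The geometric input is the elementary identity $(\sum_{e\in E}\Delta_e)_\y=\sum_{e\in E}(\Delta_e)_\y$, i.e.\ the $\y$-maximal face of a Minkowski sum is the Minkowski sum of the $\y$-maximal faces of the summands (the face-theoretic counterpart of $\N(\Pol+\Q)$ being the common refinement of $\N(\Pol)$ and $\N(\Q)$, cf.\ \cite[Proposition~7.12]{ziegler_lectures_1998}). Hence $\{v\}=\Pol(h)_\y=\sum_{e\in E}(\Delta_e)_\y$, and since the summands are nonempty while their sum is a single point, each $(\Delta_e)_\y$ is itself a single point, necessarily a vertex $b_{\Or(e)}$ of $\Delta_e$; equivalently, $j\mapsto\y(j)$ attains its maximum over the hyperedge $e$ at a \emph{unique} node $\Or(e)\in e$. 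This produces a heading $\Or$ with $v=\sum_{e\in E}b_{\Or(e)}=\delta(\Or)$.

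It then remains to check that $\Or$ is acyclic, which I would do by contradiction. If $e_1,\dots,e_\ell$ were an oriented cycle, then $\Or(e_i)\in e_{i+1}\setminus\{\Or(e_{i+1})\}$ cyclically; since $\Or(e_{i+1})$ is the \emph{strict} $\y$-maximizer on $e_{i+1}$, this forces $\y(\Or(e_i))<\y(\Or(e_{i+1}))$ for every $i$, and running around the cycle yields the contradiction $\y(\Or(e_1))<\y(\Or(e_2))<\dots<\y(\Or(e_\ell))<\y(\Or(e_1))$. Together with the easy containment this proves the proposition. Reading the inequality bookkeeping in reverse moreover gives the natural sharpening that each acyclic heading realizes an actual vertex: the relation on $I$ defined by $i\prec j$ whenever some $e\in E$ satisfies $\Or(e)=j$ and $i\in e\setminus\{j\}$ has no directed cycles --- a minimal $\prec$-cycle has distinct nodes and is exactly an oriented cycle of $h$ by the computation just made --- so it extends to a linear order, and any $\y$ strictly increasing along that order makes $\Or(e)$ the unique $\y$-maximizer on each $e$, whence $\Pol(h)_\y=\{\delta(\Or)\}$. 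Thus the vertex set of $\Pol(h)$ is precisely $\{\delta(\Or)\colon\Or\text{ acyclic}\}$.

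I expect the main difficulty to be care rather than depth: one must invoke the ``$\y$-maximal face of a Minkowski sum'' identity with enough genericity that every $(\Delta_e)_\y$ collapses to a single vertex, and then match the combinatorial definition of an oriented cycle of a hypergraph exactly to the cyclic chain of strict inequalities $\y(\Or(e_i))<\y(\Or(e_{i+1}))$, keeping the cyclic indexing straight and, for the converse, reducing a relation-cycle to one with distinct nodes so that the underlying hyperedges genuinely form an oriented cycle.
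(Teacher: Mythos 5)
Your proof is correct, and it takes a genuinely different route from the paper's. The paper argues in the \emph{primal} picture: it first uses the commutativity of $\conv$ and Minkowski sum to write $\Pol(h)=\conv\{\delta(\Or)\colon\Or\text{ any heading}\}$, and then proves that $\delta(\Or)$ is a vertex if and only if $\Or$ is acyclic --- for the ``only if'' direction by exhibiting, from an oriented cycle $e_1,\dots,e_\ell$, the $\ell$ single-edge reversals $\Or_1^*,\dots,\Or_\ell^*$ whose in-degree vectors convexly combine to $\delta(\Or)$, and for the ``if'' direction by a source-deletion induction (strip singleton hyperedges, find a node of in-degree $0$, peel it off, repeat). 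You instead argue in the \emph{dual} picture: every vertex has a strict maximizing direction $\y$, the identity $(\sum_e\Delta_e)_\y=\sum_e(\Delta_e)_\y$ forces each $(\Delta_e)_\y$ to be a unique vertex $b_{\Or(e)}$, and an oriented cycle would produce the impossible chain $\y(\Or(e_1))<\dots<\y(\Or(e_\ell))<\y(\Or(e_1))$ of strict inequalities. This is shorter, avoids the bookkeeping of both the convex-combination identity and the deletion induction, and has the pleasant side effect of running on the very same Minkowski-sum-of-faces computation (\cref{eq:MSy}, \cref{eq:convY}) that the paper reuses later in the proofs of \cref{prop:maxChrom} and \cref{prop:hgreci}, so the three arguments become one. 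Your closing paragraph (building a $\y$ from a linear extension of the relation $i\prec j$ to show each acyclic heading does realize a vertex) supplies the converse that the paper proves by source deletion; it is not needed for the set-equality statement but correctly sharpens it to a description of the vertex set. One small point worth spelling out is the claim that a minimal $\prec$-cycle yields an oriented cycle of $h$: if $i_1\prec i_2\prec\dots\prec i_\ell\prec i_1$ is witnessed by edges $e_1,\dots,e_\ell$ with $\Or(e_k)=i_{k+1}$ and $i_k\in e_k\setminus\{i_{k+1}\}$, then $\Or(e_k)=i_{k+1}\in e_{k+1}\setminus\{i_{k+2}\}=e_{k+1}\setminus\{\Or(e_{k+1})\}$ cyclically, which is exactly the paper's definition; this is the index-shift you allude to and it does go through.
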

\begin{proof}
 Since the Minkowski sum of convex hulls of point sets is the same as the convex hull of the Minkowski sum of the points sets, we have
  \begin{equation}
  \Pol(h)=\sum_{e\in E}\conv\big({\{b_i\ \colon \ i\in e\}}\big)=\conv\left( \sum_{e\in E} \{b_i\ \colon \ i\in e\}\right).
 \end{equation}
 Every point in the convex hull on the right-hand side is 
 the vector of in-degrees of the nodes for some heading $\Or$. 
 Indeed, choosing some $b_i$ in every summand corresponds to choosing $i\in e$ as the head for the hyperedge $e$, and vice versa.
 It is left to show  that $\delta(\Or)$ is a vertex of $\Pol(h)$ if and only if the heading $\Or$ is acyclic.
 
 First, consider a heading containing an oriented cycle $e_1,\dots,e_\ell$. 
 Then
 \begin{equation}
  {\Or(e_1)\in e_2\setminus\Or(e_2), \dots, \Or(e_\ell)\in e_1\setminus\Or(e_1)}
 \end{equation}
 holds.
 We will construct new headings $\Or_1^*,\dots,\Or_\ell^*$ such that their vectors of in-degrees \linebreak
  $\delta(\Or_1^*),\dots,\delta(\Or_\ell^*)$ convex combine the vector of in-degrees $\delta(\Or)$ of the original heading $\Or$.
We define the new headings $\Or^*_j$ by changing the orientation of the
 hyperedge $e_j$ in the cycle, as depicted in \cref{fig:hgZykel}:
 \begin{equation}
\begin{split}
 \Or_1^*(e)\coloneqq\begin{cases}
               \Or(e_\ell) &\quad \text{if } e=e_1\\
               \Or(e) &\quad \text{otherwise} 
              \end{cases}
\quad\text{and for }j=2,\dots,\ell\, \quad
 \Or_j^*(e)\coloneqq\begin{cases}
               \Or(e_{j-1}) &\quad \text{if } e=e_j\\
               \Or(e) &\quad \text{otherwise.} 
              \end{cases}
\end{split}
\end{equation}
 \begin{figure}
 \centering
 \includegraphics[width=\textwidth]{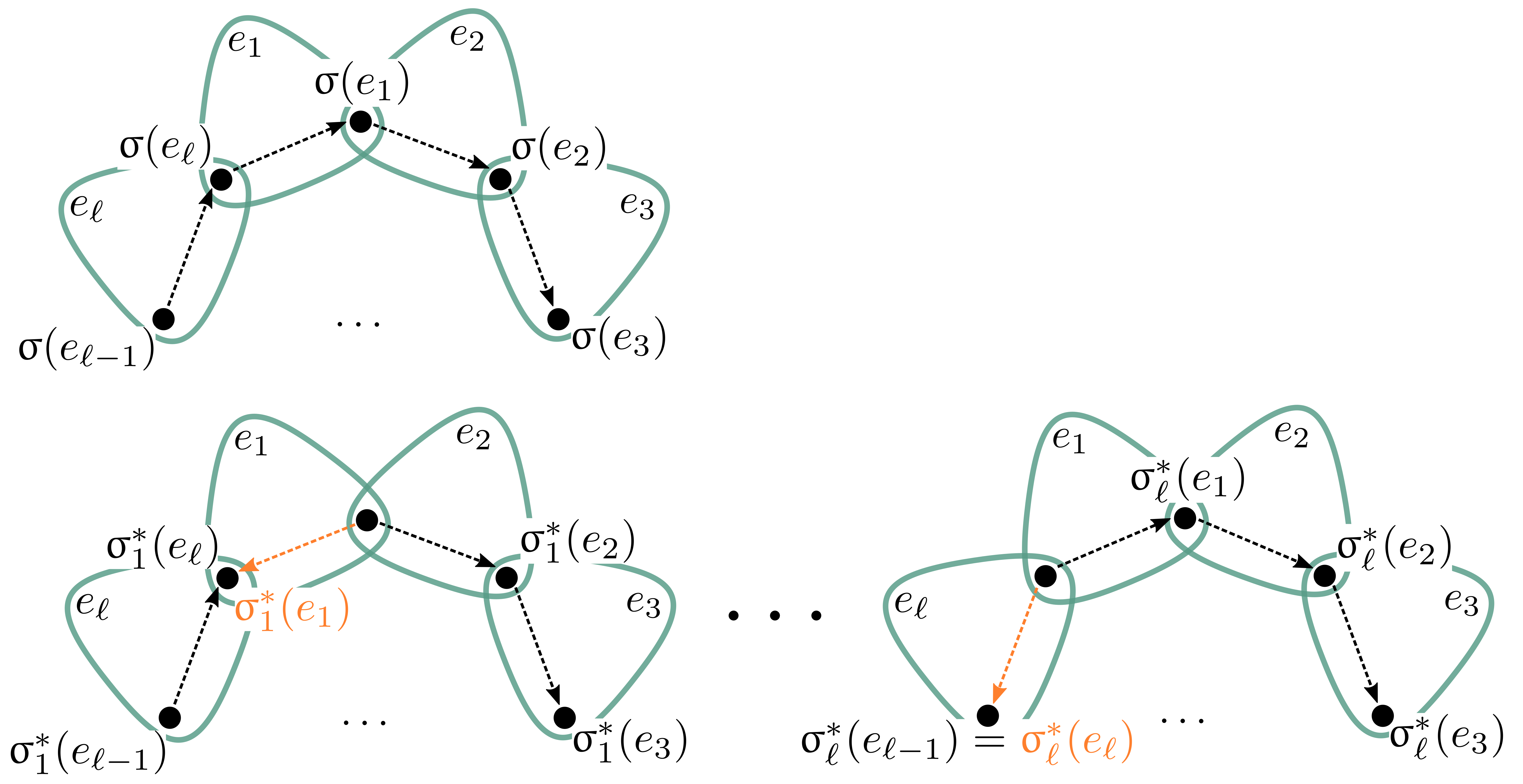}
 \caption{An oriented cycle $e_1,\dots,e_\ell$ with heading $\Or$ (top) and the 
 new headings $\Or_1^*,\dots,\Or_\ell^*$ on the edges $e_1,\dots,e_\ell$ (below).}
 \label{fig:hgZykel}
\end{figure}
 Then
 \begin{equation}
 \delta(\Or)=\sum_{j=1}^\ell \frac{1}{\ell}\delta(\Or_j^*).
\end{equation}
 Therefore, the vector of in-degrees $\delta(\Or)$ of a  heading $\Or$ containing a cycle cannot be a vertex.
 
 Now, let $\Or$ be an acyclic heading and let us assume there are headings 
 $\Or^*_1,\dots,\Or_l^*$ and scalars $0\leq\lambda_1,\dots, \lambda_l\in \R$ such that
\begin{equation}
 \delta(\Or)=\sum_{j=1}^\ell \lambda_j\delta(\Or_j^*)\quad\text{and}\quad\sum_{j=1}^\ell\lambda_i=1.
 \label{eq:conOr}
\end{equation}
First note that hyperedges $e$ with cardinality $\lvert e\rvert=1$ have only one possible
heading (the one choosing the only node in the hyperedge as head)
and those edges do not appear in oriented cycles. 
Hence they are irrelevant when it comes to deciding whether an heading is acyclic or not. 
Therefore we delete all singleton hyperedges and adjust the values in $\delta(\Or)$ as well as in $\delta(\Or^*_1),\dots,\delta(\Or_\ell^*)$.

 Since the heading $\Or$ is acyclic and we deleted all singleton hyperedges, there exists 
 at least one  source $s\in I$ with $\delta(\Or)_s=0$.
 From \cref{eq:conOr} it follows that $\delta(\Or_j^*)_s=0$  for all $j=1,\dots, \ell$.
 So, for the node $s$ the in-degree of all the headings is identical.
 We proceed by first deleting the source $s$ in all hyperedges,  then deleting all hyperedges $e$ with cardinality $\lvert e\rvert=1$,
 and adjusting the entries in $\delta(\Or)$,  $\delta(\Or^*_1),\dots,\delta(\Or_l^*)$.
 After finitely many iterations (the node set $I$ is finite) we get
 $\delta(\Or)_i=\delta(\Or_j^*)_i$ for every node $i\in I$ and all $j=1,\dots,\ell$ and 
 the in-degree vector $\delta(\Or)$ of the acyclic heading $\Or$ cannot be written as 
 a convex combination, that is, $\delta(\Or)$ is a vertex.
\end{proof}

 A \demph{coloring of a hypergraph} $h=(I,E)$ with $m$ colors is a map $c\colon I\to [m]$
 that assigns a color $c(i)\in[m]$ to every node $i\in I$.
 A node $i\in e\in E$ is called a \demph{maximal node} in the hyperedge~$e$ for the coloring $c$ if the color $c(i)$ is maximal among the colors in the hyperedge $e$,
 that is $c(i)=\max_{j\in e}c(j)$. The color $\max_{j\in e}c(j)$ is called the \demph{maximal color}.
 A coloring $c:I\to [m]$ of a hypergraph $h=(I,E)$ is called \demph{proper}
 if every hyperedge $e\in E$ contains a unique maximal node $i\in e$.
 This definition of a proper coloring is the same as, e.g., in \cite{Aval2020},
 but different from the ones in \cite{Erdoes1966, Bujtas2015, Breuer2012, Agnarsson2005}.
 A coloring $c\colon I\to [m]$ and a heading $\Or\colon E\to I$ of a hypergraph $h=(I, E)$
 are said to be \demph{compatible} if $c(\Or(e))=\max_{j\in e}c(j)$, i.e.,
 if the head $\Or(e)$ of a hyperedge $e$ has maximal color. 
See \cref{abb:hgColorComp} for some examples.
\begin{figure}
 \begin{subfigure}[t]{.245\textwidth}
 \centering
  \includegraphics[width=.8\linewidth]{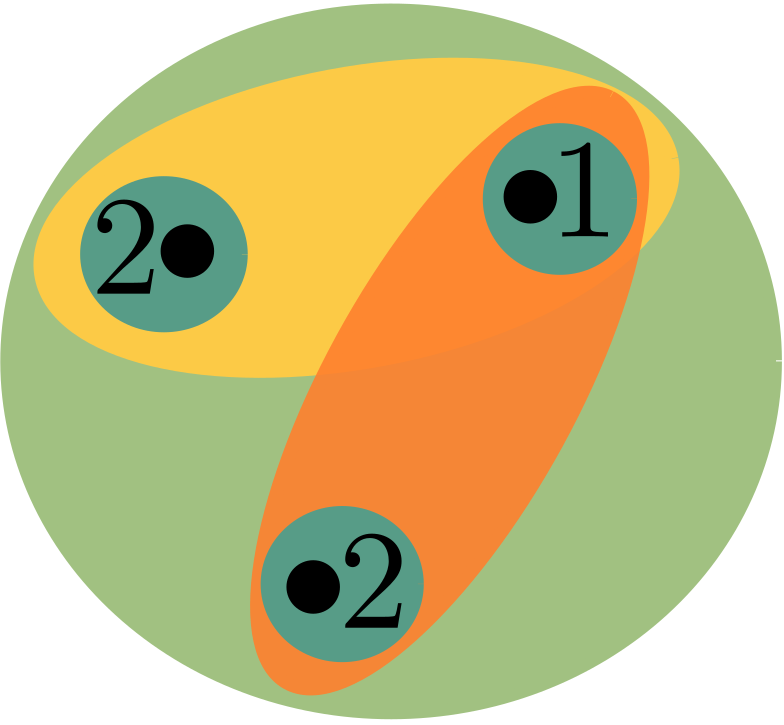}
  \caption{Not a proper coloring $c_1\colon\{a,b,c\}\to\{1,2\}$.}\label{abb:hgColor1}
 \end{subfigure}\hfill%
\begin{subfigure}[t]{.23\textwidth}
\centering
  \includegraphics[width=.8\linewidth]{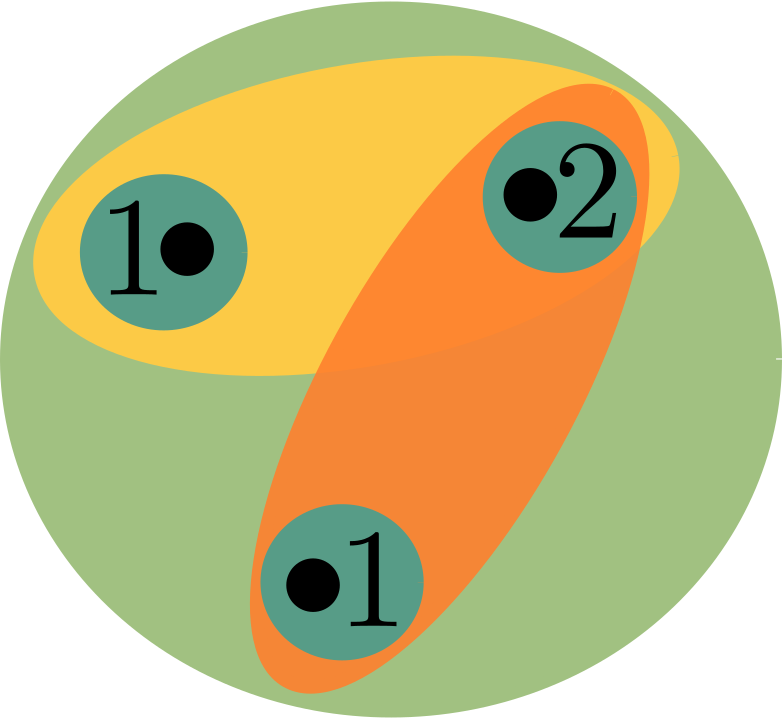}
  \caption{A proper coloring $c_2\colon\{a,b,c\}\to\{1,2\}$.}\label{abb:hgColor2}
 \end{subfigure}\hfill%
 \begin{subfigure}[t]{.23\textwidth}
 \centering
  \includegraphics[width=.8\linewidth]{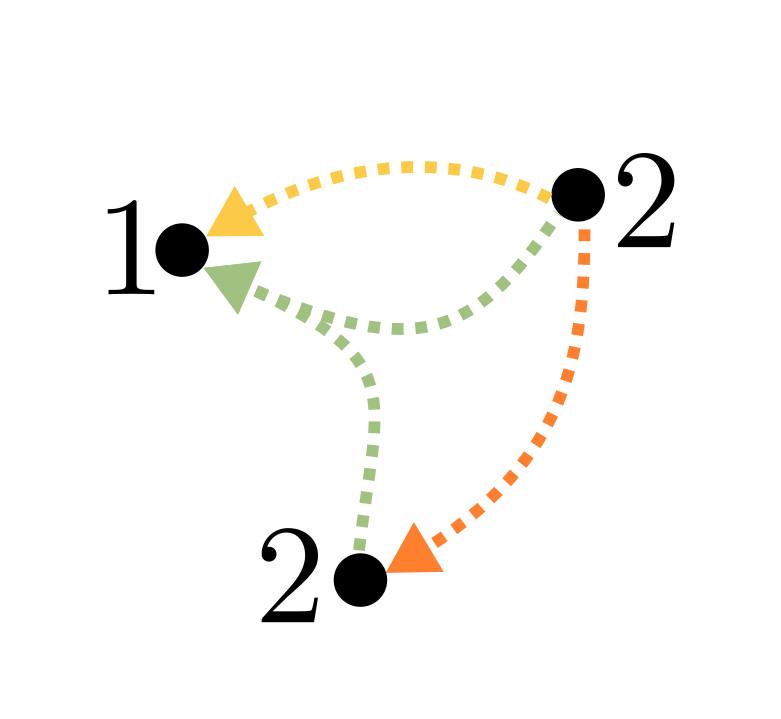}
  \caption{Incompatible heading and coloring.}\label{abb:hgComp1}
 \end{subfigure}\hfill%
 \begin{subfigure}[t]{.23\textwidth}
 \centering
  \includegraphics[width=.8\linewidth]{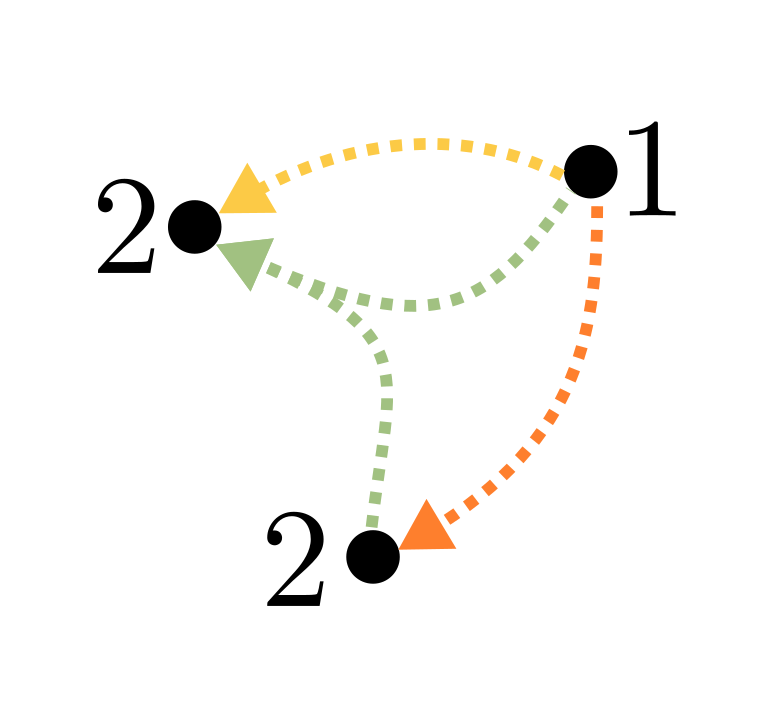}
  \caption{Compatible heading and coloring.}\label{abb:hgComp2}
 \end{subfigure}
 \caption{Hypergraph $h=\left(\left\{a,b,c\right\},\left\{\left\{a,b,c\right\},\left\{a,b\right\},\left\{b,c\right\},\left\{a\right\},\left\{b\right\},\left\{c\right\}\right\}\right)$ with colorings $c_i\colon\{a,b,c\}\to\{1,2\}$.}\label{abb:hgColorComp}
\end{figure}
\begin{rem}
 Considering usual graphs, the above definitions of (proper) colorings, (acyclic) headings and compatible pairs for hypergraphs specialize to those commonly used for graphs. In the same way the following \cref{prop:maxChrom} and \cref{prop:hgreci} generalize Stanley's reciprocity theorem for chromatic polynomials of graphs \cite{Stanley1973}.
\end{rem}
\begin{thm}[{\cite[Theorem 18]{Aval2020}}]\label{prop:maxChrom}
For a hypergraph $h=(I,E)$ with $\lvert I\rvert\eqqcolon d$ and a positive integer $m\in \Z_{>0}$,
\begin{equation}
 \chi_d(h)(m)\coloneqq \# (\text{proper colorings of $h$ with m colors})
\end{equation}
agrees with a polynomial in $m$ of degree $d$.
\end{thm}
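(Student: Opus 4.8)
The plan is to realize the chromatic polynomial $\chi_d(h)(m)$ as the inner pruned Ehrhart function of the unit cube with respect to the normal fan $\N(\Pol(h))$ of the hypergraphic polytope, and then invoke the polynomiality half of \cref{cor:genP2} (equivalently \cref{thm:pioehr} together with \cref{rem:pioehr} and \cref{lem:cube&braid}). Concretely, I first want to identify a coloring $c\colon I\to[m]$ with a direction $\y\in[m]^d$ in the dual space $(\R^d)^*\simeq\R^I$. The key combinatorial observation is that a coloring $c$ is proper exactly when the direction $c$ is $\Pol(h)$-generic, i.e.\ when the $c$-maximal face $\Pol(h)_c$ is a vertex. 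By \cref{prop:hgPconv} the vertices of $\Pol(h)$ are the in-degree vectors $\delta(\Or)$ of acyclic headings; and since $\Pol(h)=\sum_{e\in E}\Delta_e$ is a Minkowski sum, maximizing $c$ over $\Pol(h)$ amounts to maximizing $c$ over each simplex $\Delta_e$ separately. Maximizing $c$ over $\Delta_e=\conv\{b_i: i\in e\}$ singles out the node(s) of $e$ of maximal color; the maximizer is a single point (a vertex of $\Delta_e$) precisely when that maximal node is unique, and in that case the head is forced. Hence $\Pol(h)_c$ is a single point iff every hyperedge has a unique maximal node, i.e.\ iff $c$ is proper, and moreover a proper coloring determines (and is compatible with) a unique acyclic heading.

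Granting this dictionary, I would compute
\begin{equation}
\begin{split}
 \chi_d(h)(m) &= \#\{c\colon I\to[m] : c \text{ proper}\}\\
 &= \#\{\y\in[1,m]^d\cap\Z^d : \y\text{ is }\Pol(h)\text{-generic}\}\\
 &= \chi_d(\Pol(h))(m),
\end{split}
\end{equation}
the last quantity being the polynomial invariant of \cref{cor:genP2} applied to the generalized permutahedron $\Pol(h)$ (which is a generalized permutahedron by \cref{cor:hgPgP}). Since \cref{cor:genP2} asserts that $\chi_d(\Pol(h))(m)$ agrees with a polynomial in $m$ of degree $d$, the same holds for $\chi_d(h)(m)$, which is the claim. (The degree is $d$ rather than $d-1$ because we count directions in the full cube $[m]^d$, not modulo the lineality line $\lambda(1,\dots,1)$.)

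The step I expect to be the main obstacle — though it is conceptually rather than technically hard — is making precise the equivalence ``$c$ proper $\iff$ $\Pol(h)_c$ is a vertex'' and, as part of that, the bijection between proper colorings and pairs (proper coloring, compatible acyclic heading). One has to check carefully that when a hyperedge $e$ has a unique maximal node, the $c$-maximal face of $\Delta_e$ really is that single vertex $b_i$, and that the Minkowski-sum structure lets these per-edge maximizers be assembled into the $c$-maximal face of $\Pol(h)$; conversely, if some hyperedge has two maximal nodes then $\Pol(h)_c$ contains an edge of $\Pol(h)$ and $c$ is not generic. This uses only the elementary fact that the $\y$-maximal face of a Minkowski sum is the Minkowski sum of the $\y$-maximal faces of the summands, plus the vertex description of $\Pol(h)$ from \cref{prop:hgPconv}. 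Everything else — polynomiality, the fact that the relevant pruned inside-out polytope has integral regions — is already packaged in \cref{lem:cube&braid}, \cref{thm:pioehr}, \cref{rem:pioehr}, and \cref{cor:genP2}.
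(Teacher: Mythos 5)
Your proposal is correct and follows essentially the same route as the paper: identify $m$-colorings with lattice points of $[m]^d$ in the dual space, show that properness of $c$ is equivalent to $\Pol(h)_c$ being a vertex via the fact that the $c$-maximal face of the Minkowski sum $\sum_e \Delta_e$ is the Minkowski sum of per-edge maximal faces (and is a point iff each summand is), then invoke \cref{cor:hgPgP} and the polynomiality half of \cref{cor:genP2}. The only cosmetic difference is that you bring in \cref{prop:hgPconv}, which is not actually needed for this direction-counting argument (the paper reserves it for the reciprocity statement, \cref{prop:hgreci}); the equivalence ``proper $\iff$ generic'' is carried entirely by the Minkowski-sum observation you already make.
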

\begin{proof}
Without loss of generality we assume $I=[d]$.
 For a hypergraph $h=(I,E)$ we consider  its corresponding hypergraphic polytope $\Pol(h)$ and since
 $\Pol(h)$ is a generalized permutahedron we can apply \cref{cor:genP}. 
 Hence we need to show
 \begin{equation}
  \# (\Pol(h)\text{-generic directions }  \y\in[m]^d)
	= \# (\text{ proper colorings of $h$ with $m$ colors}).
 \end{equation}
 We do so via a bijection. 
 For  $\y\in[m]^d$ we define the coloring $c_\y(i)\coloneqq y_i$ for $i=1,\dots,d$ 
 and vice versa, for a coloring $c\colon I\to[m]$ define $\y^c\in[m]^d$ by $y^c_i\coloneqq c(i)$.

 It is left to show that a direction $\y\in[m]^d$ is $\Pol(h)$-generic if and only if the coloring $c_y$ is proper.
 Recall $\y\in \R^I$ is $\Pol(h)$-generic if the maximal face $\left(\Pol(h)\right)_\y$ in direction $\y$ is a vertex.
 Linear functionals and Minkowski sums commute (see, e.g., \cite[Lemma 7.5.1]{Beck2018}), so 
 \begin{equation}\label{eq:MSy}
  \left(\Pol(h)\right)_\y = \bigg(\sum_{e\in E} \Delta_e\bigg)_\y =  \sum_{e\in E}(\Delta_e)_\y\, .
 \end{equation}
 Since the Minkowski sum is a point if and only if every summand is a point, 
the direction $\y$ is $\Pol(h)$-generic if and only if it is 
$\Delta_e$-generic for every hyperedge $e\in E$.
Finally, the direction $\y$ is $\Delta_e$-generic if and only if 
$(\Delta_e)_\y$ is a vertex.
Recall that $\Delta_e=\conv\{b_i\ \colon \ i\in e\}$ is the convex hull of standard basis vectors $b_i$, so
\begin{equation}\label{eq:convY}
 \left(\Delta_e \right)_\y = \conv\Big\{ b_i\ \colon\ i\in e,\  \y(i)=\max_{j\in e}\y(j) \Big\}\,.
\end{equation}
Therefore $\left(\Pol(h)\right)_\y$ is a vertex, if and only if for every hyperedge $e$ the direction $\y$ has a unique maximal value among the entries $\y(i)$ with $i\in e$.
The last statement is equivalent to the coloring $c_\y$ having a unique maximal node, i.e., being proper. In summary, for a positive integer $m\in\Z_{>0}$
\begin{equation}
 \begin{split}
 \chi_d(h)(m) &=\# \left(\text{proper colorings of $h$ with $m$ colors}\right) \\
	&= \# \left(\Pol(h)\text{-generic directions }  \y\in[m]^d \right) = \chi_d(\Pol(h))(m)
 \end{split}
\end{equation}
which is a polynomial in $m$ of degree $d$.
\end{proof}

\begin{thm}[{\cite[Theorem 24]{Aval2020}}]\label{prop:hgreci}
 Let $h=(I,E)$ be a hypergraph and $m\in\Z_{>0}$ a positive integer. Then
 \begin{equation}
 \begin{split}
    (-1)^d\chi_d(h)(-m)=\# (&\text{compatible  pairs of acyclic headings of $h$}\\
			&\text{ and colorings of $h$ with $m$ colors}).
 \end{split}
 \end{equation}
 In particular, the number of acyclic headings of $h$ equals $(-1)^d\chi_d(h)(-1)$.
\end{thm}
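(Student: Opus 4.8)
The plan is to feed the hypergraphic polytope into the reciprocity theorem for generalized permutahedra and then analyze the maximal faces that appear. By the proof of \cref{prop:maxChrom} we have $\chi_d(h)=\chi_d(\Pol(h))$, and $\Pol(h)$ is a generalized permutahedron by \cref{cor:hgPgP}, so \cref{cor:genP} gives
\[
(-1)^d\chi_d(h)(-m)=\sum_{\y\in[m]^d}\#\bigl(\text{vertices of }\Pol(h)_\y\bigr),
\]
where a direction $\y\in[m]^d$ is identified with the coloring $c_\y$ defined by $c_\y(i)=y_i$. It therefore suffices to prove that for every coloring $c=c_\y$ the number of vertices of $\Pol(h)_\y$ equals the number of acyclic headings of $h$ that are compatible with $c$; summing this identity over the $m^d$ colorings yields the theorem, and the ``in particular'' statement follows by putting $m=1$, since the unique coloring $I\to[1]$ is compatible with every heading of $h$.

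To compute the vertices of $\Pol(h)_\y$ I would first identify it as a hypergraphic polytope. By \eqref{eq:MSy} and \eqref{eq:convY}, $\Pol(h)_\y=\sum_{e\in E}(\Delta_e)_\y=\sum_{e\in E}\Delta_{M_e}$, where $M_e\coloneqq\{i\in e\,\colon\, c(i)=\max_{j\in e}c(j)\}$ is the nonempty set of maximal nodes of $e$ for the coloring $c$. Thus $\Pol(h)_\y=\Pol(h_c)$ for the hypergraph $h_c\coloneqq(I,(M_e)_{e\in E})$. A heading of $h_c$ is exactly a choice of one maximal node in each hyperedge of $h$, i.e.\ a heading $\Or$ of $h$ that is compatible with $c$; this correspondence is clearly a bijection between the headings of $h_c$ and the $c$-compatible headings of $h$.

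The central point is that this bijection matches acyclic headings of $h_c$ with acyclic headings of $h$ that are compatible with $c$. One direction is immediate because $M_e\subseteq e$: an oriented cycle for a heading of $h_c$ is, verbatim, an oriented cycle for the corresponding heading of $h$. For the converse, suppose a $c$-compatible heading $\Or$ of $h$ has an oriented cycle $e_1,\dots,e_\ell$. Since $\Or(e_j)\in e_{j+1}$ and compatibility forces $c(\Or(e_j))=\max_{k\in e_j}c(k)$, we obtain $c(\Or(e_1))\le c(\Or(e_2))\le\cdots\le c(\Or(e_\ell))\le c(\Or(e_1))$, so all these colors are equal; hence each $\Or(e_j)$ is also a maximal node of $e_{j+1}$, i.e.\ $\Or(e_j)\in M_{e_{j+1}}$, and $e_1,\dots,e_\ell$ becomes an oriented cycle for the corresponding heading of $h_c$. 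So $\Or$ is acyclic if and only if the heading of $h_c$ it corresponds to is.

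Finally, \cref{prop:hgPconv} identifies the vertices of $\Pol(h_c)$ with the in-degree vectors of its acyclic headings, and this identification is injective: if two distinct acyclic headings $\tau_1\ne\tau_2$ of $h_c$ satisfied $\delta(\tau_1)=\delta(\tau_2)$, then the directed multigraph on $I$ with an arc from $\tau_2(e)$ to $\tau_1(e)$ for each hyperedge $e$ with $\tau_1(e)\ne\tau_2(e)$ has equal in- and out-degree at every node, hence contains a directed cycle, which unwinds to an oriented cycle shared by $\tau_1$ and $\tau_2$ --- impossible. Combining the last two steps, $\#(\text{vertices of }\Pol(h)_\y)$ equals the number of acyclic headings of $h_c$, which equals the number of acyclic $c$-compatible headings of $h$, and the proof is complete. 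I expect the main obstacle to be the ``converse'' half of the acyclicity correspondence, where compatibility of the heading with the coloring must be used to promote maximal nodes of one hyperedge to maximal nodes of the next along a cycle; the injectivity of the in-degree map, although easy, is also needed and is worth spelling out.
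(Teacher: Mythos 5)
Your proof is correct and follows the same overall strategy as the paper: apply the reciprocity theorem for generalized permutahedra to $\Pol(h)$, translate directions $\y\in[m]^d$ to colorings $c_\y$, and count the vertices of $\Pol(h)_\y$ by identifying them with $c_\y$-compatible acyclic headings via \cref{prop:hgPconv}. The difference is that you make explicit several points the paper treats informally, and these additions genuinely strengthen the argument. First, you cleanly package $\Pol(h)_\y$ as the hypergraphic polytope $\Pol(h_c)$ of the auxiliary hypergraph $h_c=(I,(M_e)_{e\in E})$, which makes the application of \cref{prop:hgPconv} to the face $\Pol(h)_\y$ (rather than to $\Pol(h)$ itself) unambiguous. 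Second, you verify that the natural bijection between headings of $h_c$ and $c$-compatible headings of $h$ preserves acyclicity in \emph{both} directions; the nontrivial direction, where an oriented cycle in a $c$-compatible heading of $h$ is promoted to an oriented cycle in $h_c$, really does require the chain of inequalities forced by compatibility, as you note. Third, and most importantly, you observe that \cref{prop:hgPconv} as stated only asserts $\Pol(h_c)=\conv\{\delta(\Or)\}$, so to turn the vertex count into a heading count one needs $\delta$ to be injective on acyclic headings; your multigraph argument (extract a directed cycle from the difference of two headings with equal in-degree vector) supplies exactly this and closes a gap that the paper's one-line claim ``vertices of $\Pol(h)_\y$ correspond to acyclic headings compatible to the coloring $c_\y$'' leaves implicit. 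So this is the same route taken with noticeably more care, not a different one.
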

Note that the colorings do not need to be proper here.
\begin{proof}
 We follow the same idea as in the previous proof, that is, we use
 \begin{equation}
 \begin{split}
	(-1)^d\chi_d(h)(-m)=(-1)^d\chi_d(\Pol(h))(-m) 
	 = \sum_{\y\in[m]^d} \#  \left(\text{vertices of } \Pol(h)_\y\right)\,
 \end{split}
\end{equation}
and need to show
\begin{equation}
\begin{split}
  \sum_{\y\in[m]^d}  \# \left(\text{vertices of } \Pol(h)_\y\right)
  = &\sum_{c\text{  $m$-coloring of }h}  \# \left(\text{acyclic headings of $h$ compatible to  } c\right)\,.
\end{split}
\end{equation}
We use the same bijection between $m$-colorings $c_\y$ of $h$ and directions $\y^c\in[m]^d$ as above.
It is left show that for every direction $\y\in[m]^d$ the number of vertices of the maximal face 
$(\Pol(h))_\y$ in direction $\y$ 
equals the number of  acyclic headings of  $h$ compatible to the coloring $c_\y$ defined by the direction $\y$.
We compute the $\y$-maximum faces as in \cref{eq:MSy,eq:convY}:
\begin{equation}\label{eq:phmax}
 \Pol(h)_\y= \bigg(\sum_{e\in E}\Delta_e \bigg)_\y =\sum_{e\in E}(\Delta_e)_\y 
	=\sum_{e\in E}\conv\Big\{b_i\in\R I\ \colon\ i\in e,\ y(i)=\max_{j\in e}y(j)\Big\} \,.
\end{equation}
From \cref{eq:phmax} we can see that a vertex of $\Pol(h)_\y$ corresponds to choosing for every hyperedge $e\in E$ one of the  nodes $i\in e$ with maximal entry $\y(i)$, i.e., maximal color $c_\y(i)$. 
This is, by definition, the same as constructing a compatible heading for the coloring $c_\y$.
We know by \cref{prop:hgPconv} that vertices correspond to acyclic headings. 
Hence, vertices of $\Pol(h)_\y$ correspond to acyclic headings compatible to the coloring $c_\y$.
Vice versa, for a coloring $c$ the compatible acyclic headings are those with heads of hyperedges having a maximal coloring. 
That is, these acyclic headings correspond to those vertices, that are vertices of the maximum face $\Pol(h)_{\y^c}$ in direction~$\y^c$.
\end{proof}

\section*{Acknowledgments}
The author is extremely grateful to Matthias Beck for numerous enlightening conversations that, in particular, resulted in the idea for this paper, and for many useful hints, comments and suggestions during the process of writing this paper.
The author also wishes to thank Stefan Felsner for his support during and after writing her master thesis, as well as for introducing the author to various classes of polytopes and their interesting combinatorial properties. 
\cref{Assec:hg,appendix} were part of that master thesis.
The author would like to thank Thomas Zaslavsky for his nice idea to call the hypergraph orientations considered here \emph{headings} and for further helpful comments, as well as an anonymous referee for their useful suggestions.

\appendix
\section[Appendix]{Equivalent descriptions of permutahedra, generalized permutahedra, and hypergraphic polytopes}\label{appendix}

We compile some useful and nice information about permutahedra, generalized permutahedra and hypergraphic polytopes in this Appendix. This section is not original work, but the proof of \cref{thm:gpsf} below might be hard to find in the literature.
As explained in \Cref{ssec:notation} we will use the notations for $\R^d$ and $\R^I$ interchangeably.

Recall that we defined the standard permutahedron $\pi_d$ to be the convex hull of all the permutations of the point with entries $[d]$.
The facet description of the standard permutahedron is given by 
\begin{equation}
 \begin{split}
    \sum_{i=1}^d x_i&=d+\left(d-1\right)+\dots+1=\frac{d(d+1)}{2}\\
    \sum_{i\in T}x_i&\leq d+\left(d-1\right)+\dots+\left(d-\abs{T}+1\right) \quad \text{for all }T\subseteq [d].
 \end{split}
\end{equation}
Moreover, every face of the standard permutahedron can be described combinatorially by compositions, for details see, e.g., \cite[Section~4.1.]{AA17}.
The standard permutahedron can equivalently be described as the Minkowski sum of line segments:
 \begin{equation}
  \pi_d=\sum_{i<j} \Delta_{\{i,j\}},
 \end{equation}
where $\Delta_{\{i,j\}}\coloneqq \conv\{e_i,e_j\}$ and $e_i$ are standard basis vectors. This implies, in particular, that standard permutahedra are zonotopes.

Recall that generalized permutahedra are those polytopes that have a coarsening of the braid fan as normal fan.
Since the normal fan $\N(\Pol+\Q)$ of the Minkowski sum $\Pol+\Q$ of two polytopes $\Pol$ and $\Q$ is the common refinement of the two normal fans $\N(\Pol)$ and $\N(\Q)$ \cite[Proposition 7.12]{ziegler_lectures_1998},
generalized permutahedra are the (weak) Minkowski summands of standard permutahedra. That is, $\Pol\subset\R^d$ is a generalized permutahedron if and only if there exists a polytope $\Q\subset \R^d$ and a real scalar $\lambda>0$ such that $\Pol+\Q=\lambda \pi_{[d]}$.

One picturesque way of defining generalized permutahedra is by deforming standard permutahedra by parallel shifts of facets. 
This deformation maintains the normal fan until a face degenerates, i.e., at least two vertices are merged into one vertex. In that case the corresponding normal cones of the vertices are glued together. 
One example can be seen in \cref{fig:verallPerm}, where the top right edge degenerated and the two corresponding neighboring full-dimensional cones were combined.
A formal description of these deformations and a detailed proof of equivalence can be found in \cite[Appendix]{postnikov_faces_2006}.

Finally, generalized permutahedra can be uniquely described as the base polytopes of submodular functions $z\colon 2^I \to \R$ with $z(\varnothing)=0$ (\cref{thm:gpsf}).
See, e.g., \cite[Theorem 3.11 and 3.17]{castillo_deformation_2020}.
For the sake of completeness and the convenience of the reader we include a self-contained proof of the well-known equivalence of the definitions of generalized permutahedra through braid fan coarsenings and submodular functions. 
 A set function $z\colon 2^I\to \R$ is called \demph{submodular} if for all $A,B\subseteq I$
  \begin{equation}
	z(A)+z(B)\geq z(A\cup B)+z(A\cap B)\,.
 \end{equation} 
 We define the \demph{base polytope} $\Po(z)$ of a submodular function $z\colon 2^I\to \R$ by
 \begin{equation}
  \Po(z)\coloneqq\big\{x\in \R I\ \colon\ \sum_{i\in I}x_i=z(I)\ \text{ and }\ 
  \sum_{i\in A}x_i \leq z(A) \ \text{ for all }A\subseteq I \big\}\,.
 \end{equation}
 To simplify the proof of \cref{thm:gpsf} we will use the following notation
 \begin{equation}
  x(A)\coloneqq \sum_{i\in A}x_i \qquad\text{ for }A\subseteq I \,.
 \end{equation}
With that notation at hand we can write the definition of the base polytopes as 
 \begin{equation}
  \Po(z)= \big\{x\in \R I\ \colon\ x(I)=z(I)\ \text{ and }\ 
  x(A) \leq z(A) \ \text{ for all }A\subseteq I\big\}\,.
 \end{equation}
As mentioned above  the standard permutahedron $\pi_I$ is the base polytope of the submodular function 
\begin{equation}
 z(A)\coloneqq\lvert I\rvert+ (\lvert I\rvert-1)+\dots+ (\lvert I\rvert-\lvert A\rvert+1)\,.
\end{equation}

\begin{thm}[]\label{thm:gpsf}
 A polytope $\Pol$ is a generalized permutahedron if and only if it is the base polytope $\Po(z)$ of a submodular function $z\colon 2^I\to \R$ with $z(\varnothing)=0$.
\end{thm}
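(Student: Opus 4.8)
The plan is to prove both implications by connecting normal fans to submodular functions through the combinatorial description of braid cones in \cref{lem:braid}. I would start with the direction that is more concrete: suppose $\Pol$ is a base polytope $\Po(z)$ of a submodular function $z$ with $z(\varnothing)=0$. First I would note that $\Pol$ is a polytope (it is bounded because $x(I)=z(I)$ is fixed and each coordinate is bounded via $x_i = x(I) - x(I\setminus\{i\}) \geq z(I) - z(I\setminus\{i\})$). To show its normal fan coarsens the braid fan, I would take a generic direction $y\in\R^I$ lying in the relative interior of a full-dimensional braid cone $\B_{T_1,\dots,T_d}$ (so $y$ induces a total order on coordinates) and show that the $y$-maximal face of $\Po(z)$ is a single vertex, namely the ``greedy'' point obtained by setting $x(T_1\cup\dots\cup T_j)=z(T_1\cup\dots\cup T_j)$ successively — this is the classical greedy algorithm for submodular base polytopes, and submodularity is exactly what guarantees this greedy point lies in $\Po(z)$ and maximizes $y$. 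Since every vertex of $\Po(z)$ arises this way and the maximizer depends only on which braid cone $y$ lies in, the normal fan of $\Po(z)$ is a coarsening of the braid fan, so $\Po(z)$ is a generalized permutahedron.

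For the converse, suppose $\Pol$ is a generalized permutahedron. Since $\N(\Pol)$ coarsens the braid fan, every facet normal of $\Pol$ is a ray of the braid fan, and the rays of the braid fan are exactly the vectors $\mathbf{1}_A$ for $\varnothing \neq A \subsetneq I$ (together with $\pm\mathbf{1}_I$), by \cref{lem:braid}. Hence $\Pol$ has an irredundant-or-not inequality description of the form $x(A) \leq z(A)$ for all $A\subseteq I$ together with $x(I) = z(I)$, where I would \emph{define} $z(A) \coloneqq \max\{x(A) : x\in\Pol\}$ and set $z(\varnothing)=0$. It remains to check that this $z$ is submodular. I would verify this by the exchange/greedy characterization: for $A, B\subseteq I$ pick a vertex $v$ of $\Pol$ maximizing the direction $y = \mathbf{1}_A + \mathbf{1}_B$ (equivalently lying in a braid cone refining the partition into $A\cap B$, $A\triangle B$, $I\setminus(A\cup B)$ with the appropriate order); then $v(A\cup B) = z(A\cup B)$ and $v(A\cap B) = z(A\cap B)$ simultaneously because both $A\cup B$ and $A\cap B$ are initial segments of the chain selected by $y$, so $z(A\cup B) + z(A\cap B) = v(A\cup B) + v(A\cap B) = v(A) + v(B) \leq z(A) + z(B)$. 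This gives submodularity, and by construction $\Pol \subseteq \Po(z)$; equality follows because the defining inequalities of $\Po(z)$ are all tight on $\Pol$ (each $z(A)$ is attained), and no facet of $\Po(z)$ can strictly contain $\Pol$ since the facet normals of $\Pol$ already exhaust the relevant braid rays.

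The main obstacle I anticipate is the careful bookkeeping in the greedy/exchange argument — specifically, proving that the greedy point is actually \emph{in} $\Po(z)$ (this is where submodularity is genuinely used, via an induction on chains showing $x(S) \leq z(S)$ for \emph{all} $S$, not just the initial segments) and, in the converse, arguing that a single vertex $v$ can be chosen to be simultaneously optimal for the subsets $A\cap B$ and $A\cup B$. The latter hinges on choosing $y$ in the right braid cone so that both subsets appear as unions of initial blocks of the induced ordered set partition; this is possible precisely because $A\cap B \subseteq A\cup B$. A secondary point requiring care is confirming the list $\{\mathbf{1}_A\}$ really gives all braid-fan rays, which is immediate from \cref{lem:braid} by taking compositions with $k=2$. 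I would also need the standing identification $\R I \cong \R^I$ for phrasing the inequalities, but that is already set up in \cref{sec:prelim}.
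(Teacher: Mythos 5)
Your proof matches the paper's on the forward implication and on submodularity: the greedy point $\tilde{x}$ defined along a chain refining the braid cone, with submodularity used in the induction to show $\tilde{x}(S)\le z(S)$ for all $S$ (not just chain elements), is exactly the paper's Fujishige--Tomizawa-style argument; and your submodularity argument, picking a direction whose braid cone simultaneously contains $\mathbf{1}_{A\cap B}$ and $\mathbf{1}_{A\cup B}$, is the same as the paper's choice of a chain through $A\cap B$ and $A\cup B$. Where you take a genuinely different route is the final inclusion $\Po(z_\Pol)\subseteq\Pol$. The paper assumes a point $u\in\Po(z_\Pol)\setminus\Pol$ exists, separates it by a hyperplane with normal $t$, and reaches a contradiction via a telescoping estimate in the braid cone of $t$. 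You instead argue directly from the facet structure: since $\N(\Pol)$ coarsens the braid fan and has lineality space $\spn\{\mathbf{1}_I\}$, the normal cone of each facet of $\Pol$ must be a single two-dimensional braid cone $\cone\{\mathbf{1}_A\}+\spn\{\mathbf{1}_I\}$, so $\Pol$ is cut out by equations $x(I)=z_\Pol(I)$ and a subset of the inequalities $x(A)\le z_\Pol(A)$; hence $\Pol\supseteq\Po(z_\Pol)$ immediately. This is a valid and in fact shorter alternative, but your write-up of it is garbled --- ``no facet of $\Po(z)$ can strictly contain $\Pol$'' is not a sensible statement --- so you should instead spell out the argument just given. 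Two smaller points to tighten: you only treat full-dimensional braid cones in the forward direction, so you should explicitly invoke (or justify) the fact that a complete fan coarsens another complete fan as soon as every full-dimensional cone of the finer one is contained in a cone of the coarser one; and in the submodularity step, ``pick a vertex $v$ maximizing $y$'' should be ``pick any point $v$ of $\Pol_y$'', since $\Pol_y$ need not be a vertex for your choice of $y$, and all that is used is that $v$ simultaneously attains $z_\Pol(A\cap B)$ and $z_\Pol(A\cup B)$.
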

Before we start proving this theorem, we give a description of the faces in terms of set compositions.
 A \demph{composition} of a finite set $I$ is an ordered sequence $(T_1,\dots,T_k)$ of disjoint non-empty subsets $T_i\subseteq I$ such that $I=T_1\uplus\dots\uplus T_k$.
Let $\mathbf{1}_T$ for some $T\subseteq I$ be the $0/1$-vector with entries equal to one  for indices in the subset $T$ and zero otherwise.

\begin{lem}\label{lem:braid}
 The faces of the braid arrangement $\B_I$, also called \demph{braid cones}, can be described uniquely by compositions $I=T_1\uplus\dots\uplus T_k$:
  \begin{equation}
   \begin{split}
    \B_{T_1,\dots,T_k}\coloneqq& \big\{y\in\R^I\ \colon\ y(i)=y(j)\text{ for all }i,j\in T_a,\ 
	y(i)\geq y(j)\text{ for }i\in T_a, j\in T_b \text{ and }a<b\big\}\\
    =&\cone\{\mathbf{1}_{T_1}, \mathbf{1}_{T_1\cup T_2},\dots,
			\mathbf{1}_{T_1\cup\dots\cup T_{k-1}} \} + \spn\{\mathbf{1}_{I}\}
   \end{split}
\end{equation}
with
\begin{equation}
 \dim \B_{T_1,\dots,T_k}=k\,,
\end{equation}
where $\mathbf{1}_T$ for some subset $T\subseteq I$ is the $0/1$-vector with entries equal to one  for indices in the subset $T$ and zero otherwise.
\end{lem}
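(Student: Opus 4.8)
The plan is to prove the lemma in two stages: first establish that the braid cones are exactly the faces of the braid arrangement, and then identify each face combinatorially with a composition and verify the two descriptions coincide. I would start from the observation that the braid arrangement $\B_I$ cuts $\R^I$ into regions, and a point $y\in\R^I$ lies in the relative interior of a face determined by which of the equalities $y(i)=y(j)$ hold and, among the strict inequalities, which direction they point. Concretely, any $y$ induces an ordered set partition of $I$: group the coordinates by equal value, then order the blocks by decreasing value. This ordered set partition is precisely a composition $(T_1,\dots,T_k)$ of $I$, and the closed face of $\B_I$ containing $y$ in its relative interior is exactly the set $\B_{T_1,\dots,T_k}$ of all $y'$ whose coordinates are constant on each $T_a$ and weakly decrease from block to block — this is the first displayed equality, essentially by definition of "face of a hyperplane arrangement."

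Next I would prove the second equality, the generator description. For the inclusion $\supseteq$, I observe that each $\mathbf{1}_{T_1\cup\dots\cup T_a}$ has constant coordinates on every block $T_b$ (value $1$ if $b\le a$, value $0$ if $b>a$) and these values weakly decrease in $b$; hence each generator lies in $\B_{T_1,\dots,T_k}$, and since $\B_{T_1,\dots,T_k}$ is closed under nonnegative combinations and under adding multiples of $\mathbf{1}_I$ (which does not change any of the defining (in)equalities), the whole cone-plus-span is contained in it. For the reverse inclusion $\subseteq$, given $y\in\B_{T_1,\dots,T_k}$ let $v_a$ be the common value of $y$ on $T_a$, so $v_1\ge v_2\ge\dots\ge v_k$; then I would write $y$ as a telescoping sum
\begin{equation}
 y = v_k\,\mathbf{1}_I + \sum_{a=1}^{k-1}(v_a-v_{a+1})\,\mathbf{1}_{T_1\cup\dots\cup T_a},
\end{equation}
where each coefficient $v_a-v_{a+1}\ge 0$ and $v_k\mathbf{1}_I\in\spn\{\mathbf{1}_I\}$, exhibiting $y$ as an element of the right-hand side.

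Finally, for the dimension count $\dim\B_{T_1,\dots,T_k}=k$: the $k-1$ vectors $\mathbf{1}_{T_1},\mathbf{1}_{T_1\cup T_2},\dots,\mathbf{1}_{T_1\cup\dots\cup T_{k-1}}$ together with $\mathbf{1}_I$ are linearly independent (they form a "staircase" pattern on the blocks), so the linear span of the cone is $k$-dimensional; and the cone is full-dimensional inside that span because the telescoping representation above shows that taking all the coefficients $v_a - v_{a+1}$ and $v_k$ to range over a small open set produces an open subset of the span. For uniqueness of the composition, I would note that from any relative-interior point $y$ of $\B_{T_1,\dots,T_k}$ one recovers $(T_1,\dots,T_k)$ as the ordered-by-value set partition, so distinct compositions give faces with distinct relative interiors, hence distinct faces. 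I do not expect a serious obstacle here; the only mildly delicate point is being careful that "face of the arrangement" is being used in the sense of a closed cone (a face of some closed region) and checking that every such face arises from a composition and conversely — essentially a bookkeeping matter of matching the sign-vector description of arrangement faces with ordered set partitions.
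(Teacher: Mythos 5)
Your argument is correct. The paper itself states \cref{lem:braid} without proof, treating it as a standard fact about the braid arrangement; your proposal fills in that gap with the standard argument. All three pieces check out: the sign-vector/ordered-set-partition characterization of the faces of $\B_I$ gives the first equality, the telescoping sum $y = v_k\mathbf{1}_I + \sum_{a=1}^{k-1}(v_a-v_{a+1})\mathbf{1}_{T_1\cup\dots\cup T_a}$ (with $v_a$ the common value of $y$ on $T_a$) establishes both inclusions of the second equality, and the staircase pattern of the generators on the blocks gives linear independence and hence $\dim\B_{T_1,\dots,T_k}=k$; uniqueness follows by recovering the composition from any relative-interior point. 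The only thing worth tightening is the phrase ``essentially by definition of face of a hyperplane arrangement'': the paper defines braid cones as faces of the closed regions, so strictly one should note that the closed regions are the $\B_{T_1,\dots,T_d}$ with all $T_a$ singletons (one per permutation of $I$) and that their faces are obtained by coalescing consecutive blocks, which is exactly passing to coarser compositions -- but this is the bookkeeping you already flag, and it does not affect the validity of the argument.
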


\begin{proof}[Proof of \cref{thm:gpsf}]
For a submodular function $z\colon 2^I\to  \R$ we show that $\Po(z)$ is a generalized permutahedron by showing that every braid cone $\B_{T_1,\dots,T_k}\subset\R^I$ is contained in a normal cone of $\Po(z)$.
Since $\Po(z)$ is contained in the hyperplane $\{x\in \R I\ \colon\  x(I)=z(I)\}$ the normal cone $N_{\Po(z)}(\Po(z))$ contains the line spanned by  $\mathbf{1}_I \in\R I$, hence every  normal cone of $\Po(z)$ contains that line.

The following part of the proof relies on \cite{Fujishige1983}. 
Fujishige and Tomizawa show under which conditions a greedy-like algorithm gives an optimal solution in the base polytope of a submodular functions on a general distributive lattice.
We adapt the proof to our special case.

Let $\B_{T_1,\dots,T_k}\subset\R^I$ a braid cone.
Choose a maximal chain $\mathcal{C}\colon\emptyset= C_0\subset\dots\subset C_n=I$ in the boolean lattice $2^I$ such that $T_1,T_1\sqcup T_2,\dots,T_1\sqcup\dots\sqcup T_k$ are sets in the chain $\mathcal{C}$.
Then
\begin{equation}
 \lvert C_j\setminus C_{j-1}\rvert =1 
\end{equation}
for $j=1,\dots,n\coloneqq\lvert I\rvert$ 
and we define a linear ordering on $I$ by $i_j\coloneqq C_j\setminus C_{j-1}\in I$ for $j=1,\dots,n$.
Now, consider the point $\tilde{x}\in\R I $ defined by
\begin{equation}
\label{eq:xtilde}
 \tilde{x}_{i_j}\coloneqq z(C_j)-z(C_{j-1}) \quad \text{für }j=1,\dots,n.
\end{equation}
We will show   
\begin{enumerate}[nolistsep]
 \item that $\tilde{x}(C_j)=z(C_j)$ for $j=1,\dots,n$,
  and that the point $\tilde{x}$ lies in $\Po(z)$,
 \item that $\tilde{x}$ is maximal for all directions in the braid cone $\B_{T_1,\dots,T_k}$.
\end{enumerate}
Then it follows that the braid cone $\B_{T_1,\dots,T_k}$ is contained in the normal cone $N_{\Po(z)}(F)$, where $F$ is a face containing $\tilde{x}$.

For $j=1,\dots,n$ we compute
\begin{equation}
 \tilde{x}(C_j)=\sum_{l=1}^j \tilde{x}_{i_l}=\sum_{l=1}^j \left(z(C_l)-z(C_{l-1})\right)=z(C_j),
\end{equation}
in particular, $\tilde{x}(I)=z(I)$. 
We show by induction on the cardinality $\lvert A\rvert$ of a subset $A\subseteq I$ that $\tilde{x}(A)\leq z(A)$.
For the empty set we have $0=\tilde{x}(\varnothing)=z(\varnothing)$. 
For an arbitrary set $A\subseteq I$ let $j^*$ be the minimal index such that $A\subseteq C_{j^*}$ and define the element $i^*\coloneqq A\setminus C_{j^*-1} \in I$.
We compute using the induction hypothesis, \cref{eq:xtilde}, and submodularity of $z$ together with $A\setminus\{i^*\}= A \cap C_{j^*-1}$ and $C_{j^*}=A\cup C_{j^*-1}$:
\begin{equation}
\begin{split}
 \tilde{x}(A)\ &=\ \tilde{x}(\{i^*\})+\tilde{x}(A\setminus\{i^*\})\ \leq\  \tilde{x}(\{i^*\}) 
+z(A\setminus\{i^*\})\\
&=\ z(C_{j^*})-z(C_{j^*-1})+z(A\setminus\{i^*\})\ \leq\ z(A)\,.
\end{split}
\end{equation}
Hence, $\tilde{x}\in\Po(z)$.

Now, choose an arbitrary direction $\y\in\B_{T_1,\dots,T_k}$.
By Lemma~\ref{lem:braid} $y(i)=y(i')$ for $i,i'\in T_l$ so we can set $\hat{y}_l\coloneqq y(i) $ for $i\in T_l$ and $l=1,\dots,k$. Moreover, $\hat{y}_l\geq \hat{y}_{l+1}$. 
For a point $x\in\Po(z)$ compute:
\begin{equation}\label{eq:yx}
\begin{split}
 \y(\tilde{x})-\y(x)&=\sum_{i\in I}\tilde{x}_i\y(i)-\sum_{i\in I}x_i\y(i)
  =\sum_{l=1}^k \hat{y}_l\big(\tilde{x}(T_l)-x(T_l)\big) \\
  &= \sum_{l=1}^k \Big(\hat{y}_l \big(\tilde{x}(T_1\sqcup\dots\sqcup T_l)-x(T_1\sqcup\dots\sqcup T_l)\big)\\
	&\hspace{28ex}-\hat{y}_l \big(\tilde{x}(T_1\sqcup\dots\sqcup T_{l-1})-x(T_1\sqcup\dots\sqcup T_{l-1})\big)\Big)\\
 &= \sum_{l=1}^{k-1} (\hat{y}_l-\hat{y}_{l+1}) \big(\tilde{x}(T_1\sqcup\dots\sqcup T_l)-x(T_1\sqcup\dots\sqcup T_l)\big)
	+\hat{y}_k \big(\tilde{x}(I)-x(I)\big)\\
  &= \sum_{l=1}^{k-1} (\underbrace{\hat{y}_l-\hat{y}_{l+1}}_{\geq0}) 
  \big(\underbrace{z(T_1\sqcup\dots\sqcup T_l)-x(T_1\sqcup\dots\sqcup T_l)}_{\geq0}\big)\geq 0,
\end{split}
\end{equation}
where we use in the last equality, that the sets $T_1,T_1\sqcup T_2,\dots,T_1\sqcup\dots\sqcup T_k$ are contained in the chain $\mathcal{C}\colon\emptyset= C_0\subset\dots\subset C_n=I$ and that we already know $\tilde{x}(C_j)=z(C_j)$ for $j=1,\dots,n$.
Since the computation in \ref{eq:yx} is independent from the actual values of the direction $\y\in\B_{T_1,\dots,T_k}$, the inequality $\y(\tilde{x})\geq \y(x)$ holds for every direction $\y\in\B_{T_1,\dots,T_k}$.
So the braid cone $\B_{T_1,\dots,T_k}$ is contained in the normal cone $N_{\Po(z)}(F)$, where $F$ is a face containing $\tilde{x}$. Hence, $\Po(z)$ is a generalized permutahedron.

For the opposite implication let $\Pol$ be a generalized permutahedron.
We will define a submodular function $z_{\Pol}$ and show that  $\Pol=\Po(z_{\Pol})$.
Since the generalized permutahedron $\Pol$ is contained in the hyperplane with constant coordinate sum, the following set function is well defined:
\begin{equation}
 \begin{split}
  z_\Pol(I)&\coloneqq\sum_{i\in I} x_i \quad\text{ for }x\in\Pol\\
  z_\Pol(A)&\coloneqq \max_{x\in\Pol}\bigg(\sum_{i\in A}x_i\bigg) \quad \text{ for }A\subseteq I\,.
 \end{split}
\end{equation}
We can immediately deduce that $z(\varnothing)=0$ and $\Pol\subseteq\Po(z_\Pol)$.

First we show that $z_{\Pol}$ is submodular. 
For arbitrary $A, B\subseteq I$ find a chain $\mathcal{C}\colon \emptyset=C_0\subset C_1\subset\dots\subset C_k=I$ in the Boolean lattice $2^I$ that contains  $A\cap B$ and $A\cup B$.
We set $T_i\coloneqq C_i\setminus C_{i-1}$ for $i=1,\dots,k$ and consider the braid cone $\B_{T_1,\dots,T_k}=\cone\{\mathbf{1}_{T_1},\dots,\mathbf{1}_{T_1\sqcup\dots\sqcup T_{k-1}}\}
+\spn\{\mathbf{1}_I\}$.
Then there exists a face $F$ of $\Pol$ such that the normal cone $N_\Pol(F)$ contains the braid cone $\B_{T_1,\dots,T_k}$ and in particular every point $x\in F$ is maximal in the direction $\mathbf{1}_{A\cap B},\mathbf{1}_{A\cup B}\in \B_{T_1,\dots,T_k}$. 
Then,
\begin{equation}
 z_\Pol(A)+z_\Pol(B)\geq x(A)+x(B)=x(A\cup B)+x(A\cap B)=z_\Pol(A\cup B)+z_\Pol(A\cap B)
\end{equation}
and $z_\Pol$ is submodular.

Now it is left to show that  $\Pol\supseteq\Po(z_\Pol)$.
The main idea for this part of the proof can be found in \cite{Derksen2010}.
For the sake of a contradiction, let us assume there is a point $u\in \Po(z_\Pol)\setminus\Pol$.
Then there exists a separating hyperplane $H_{\vt,c}\coloneqq\{x\in\R I\ \colon\ \vt(x)=c\}$ such that
\begin{equation}
 \vt( u)=\sum_{i\in I}\vt_i u_i > c\quad\text{and}\quad \vt( p)=\sum_{i\in I}\vt_i p_i\leq c
	\quad\text{for all }p\in\Pol
\end{equation}
Now choose a braid cone $\B_{T_1,\dots,T_k}$ such that $t\in\B_{T_1,\dots,T_k}$ and set again $\hat{t}_l\coloneqq t_i$  for $i\in T_l$, $l=1,\dots,k$.
For points $q$ in the $t$-maximal face $F\coloneqq \Pol_t$ we know by the definition of $z$ that ${q(T_1\cup\dots\cup T_l)=z(T_1\cup\dots\cup T_l)}$ for $l=1,\dots,k$. 
Using telescoping sums we compute
\begin{equation}
\begin{split}
 t( u)=\sum_{i\in I}t_i u_i > c \geq t\cdot q&=\sum_{i\in I}t_i q_i
 =\sum_{l=1}^k \hat{t}_jq(T_l)\\
 &= \hat{t}_k q(T_1\cup\dots\cup T_k)+\sum_{l=k-1}^1 (\hat{t}_l- \hat{t}_{l+1})q(T_1\cup\dots\cup T_l)\\
 &= \hat{t}_k z(T_1\cup\dots\cup T_k)+\sum_{l=k-1}^1 (\hat{t}_l- \hat{t}_{l+1})
	z(T_1\cup\dots\cup T_l)\\
 &\geq \hat{t}_k u(T_1\cup\dots\cup T_k)+\sum_{l=k-1}^1 (\hat{t}_l- 
\hat{t}_{l+1})u(T_1\cup\dots\cup T_l)\\
 &=\sum_{l=1}^k \hat{t}_l u(T_l) =\sum_{i\in I}t_i u_i =  t( u).
 \end{split}
\end{equation}
This is a contradiction and completes the proof.
\end{proof}

Recall that the hypergraphic polytope $\Pol(h)\subset \R I$ of a hypergraph $h=(I,E)$ is defined  as 
\begin{equation}
 \Pol(h)=\sum_{e\in E}\Delta_{e} \ \subset \R I
\end{equation}
where
 \begin{equation}
  \Delta_{e}= \conv\{b_i\ \colon\ i\in e\},\quad 
  \text{for a hyperedge }e\subseteq I\, 
 \end{equation}
 and $b_i$ are the basis vectors for $i\in I$.
\begin{prop}[{\cite[Proposition~6.3.]{postnikov_permutohedra_2009}}]\label{cor:hgPgP}
For a hypergraph $h=(I,E)$ and its hypergraphic polytope $\Pol(h)$, the function $z\colon 2^I\to \R$ defined by
\begin{equation}
 z(T)\coloneqq\sum_{\substack{e\in E\\ e\cap T\neq \varnothing} }1=\#(\text{hyperedges in $h$ that intersect }T) \quad \text{for }T\subseteq I
\end{equation}
is a submodular function with $z(\varnothing)=0$ and 
\begin{equation}
 \Pol(h)=\Big\{ x\in\R I\ \colon \sum_{i\in I} x_i=z(I) \quad \text{and} \quad  \sum_{i\in T} x_i\leq z(T)\quad \text{ for } T\subseteq I \Big\}\,.
\end{equation}
 Hence, hypergraphic polytopes  are generalized permutahedra and in bijection with hypergraphs.
\end{prop}
\begin{rem}\label{rem:test}
Postnikov uses a different convention for the facet description of a generalized permutahedron:
\begin{equation}
 \Pol(z)\coloneqq \Big\{(t_1,\dots,t_d)\in \R^d\ \colon \sum_{i=1}^d t_i = z([d])\,,\ \sum_{i\in J} t_i\geq z(J)\,, \text{ for  }J\subseteq [d] \Big\}\,.
\end{equation}
This results in a differing formulation of \cref{cor:hgPgP} which is nevertheless equivalent. 
\end{rem}
For an interesting characterization when a submodular function gives rise to a hypergraphic polytope see \cite[Proposition~19.4.]{AA17}.

\bibliographystyle{alpha}
\bibliography{piop}

\end{document}